\newtheorem{theorem}{\textbf{Theorem}}
\newtheorem{remark}{\textbf{Remark}}
\newtheorem{lemma}{\textbf{Lemma}}
\newtheorem{proposition}{\textbf{Proposition}}
\newtheorem{definition}{\textbf{Definition}}
\newtheorem{assumption}{\bf Assumption}
\def\downparenfill{$\m@th\braceld\leaders\vrule\hfill\bracerd$}
\def\overparen#1{\mathop{\vbox{\ialign{##\crcr\crcr
\noalign{\kern0.4ex}
\downparenfill\crcr\noalign{\kern0.4ex\nointerlineskip}
$\hfil\displaystyle{#1}\hfil$\crcr}}}\limits}
\def\NN{{\mathbb N}}    
\def\RR{{\mathbb R}}    
\renewcommand{\Re}{\mathbb{R}} 
\def\diag{\texttt{Diag}}
\def\Un{\mathfrak{1}}     
\def\KR{\mathcal{K}}    
\def \HH{\mathfrak {H}} 
\def \thetaa{\theta_a}
\def \thetab{\theta_b}
\def \mat{\mathfrak{M}}
\def\CL{\texttt{Cl}}
\def \CR{\mathcal C}
\def \bu{\bar u} 			
\def\dlambda {\tilde \lambda}
\def \a1{\theta_{a1}}
\def \b1{\theta_{b1}}
\def \aa2{\theta_{a2}}
\def \bb2{\theta_{b2}}
\DeclareMathOperator*{\im}{im}
\renewcommand{\paragraph}[1]{\smallskip\noindent\textbf{#1.} }
\def\startmodif{\color{black}}
\def\stopmodif{\color{black}}
\newif\iflong
	\newcommand{\longue}[1]{{#1}} 
	\newcommand{\courte}[1]{{}} 
	\newcommand{\longue}[1]{{}} 
	\newcommand{\courte}[1]{{#1}} 
\newif\ifchanges
	\newcommand{\changement}[1]{\textcolor{black}{#1}} 
	\newcommand{\changement}[1]{{#1}} 
\title{State and parameter estimation: a nonlinear Luenberger observer approach (long version)}
\title{State and parameter estimation: a nonlinear Luenberger observer approach}
\author{Chouaib Afri$^{1*}$, Vincent Andrieu$^{1,3}$, Laurent Bako$^{2}$ and Pascal Dufour$^{1}$
\thanks{$^{1}$C. Afri, V. Andrieu and P. Dufour  are with the Universit\'e de Lyon, F-69622, Lyon, France
-- Universit\'e Lyon 1, Villeurbanne, France -- CNRS, UMR 5007, LAGEP, France.
 {\tt \footnotesize  afri@lagep.univ-lyon1.fr}, {\tt \footnotesize  vincent.andrieu@gmail.com}, {\tt \footnotesize  dufour@lagep.univ-lyon1.fr}
 }
\thanks{$^{3}$V. Andrieu is also with Wuppertal University,
Arbeitsgruppe Funktionalanalysis,
Gau\ss stra\ss e 20, 42097, Wuppertal, Germany.}%
\thanks{$^{2}$L. Bako is with Laboratoire Amp\`ere -- Ecole Centrale de Lyon -- Universit\'e
de Lyon, 69130 France. {\tt \footnotesize laurent.bako@ec-lyon.fr }}%
\thanks{$^{*}$Authors acknowledge the french ministry for higher education and research for funding this PhD thesis.}%
}
\begin{document}

\maketitle

\begin{abstract}
\changement{The design of a nonlinear Luenberger observer for an extended nonlinear system resulting from a parameterized linear SISO (single-input single-output) system is studied.} 
From an observability assumption of the system, the existence of such an observer is concluded.
In a second step, a 
novel algorithm for the identification of such a system is suggested. 
\startmodif
Compared to the adaptive observers available in the literature, it has the advantage to be of low dimension and to admit a strict Lyapunov function.
\stopmodif
\end{abstract}

\section{Introduction}
\changement{In this paper, the strategy of Luenberger nonlinear observer is adopted to suggest a solution to the state and parameter estimation for linear systems. 
} 

\changement{This topic has been widely studied in the literature and it is usually referred to
 as \textit{adaptive observer designs} (see the books \cite{IoannouSun_Book_2012robust,MarinoTomei_Book_95,NarendraAnnaswamy_Book_89}). 
Adaptive observer can be traced back to G. Kreisselmeier in \cite{Kreisselmeier_TAC_77}. This work has then been extended in many directions to allow time varying matrices and multi-input multi-output systems (see for instance \cite{BastinGevers_TAC_88,MarinoTomei_TAC_95,Q_Zhang_2002}). 
Most of these results are based 
on weak Lyapunov analysis in combination with LaSalle invariance principle or adaptive scheme which ensures boundedness of all signals and  asymptotic convergence of the state estimates toward the real state.
}

\changement{The nonlinear Luenberger methodology inspired from the linear case \cite{Luenberger_IEEE_TME_64} and studied in (\cite{Shoshitaishvili_TSA_90,KazantzisKravaris_SCL_98,KreisselmeierEngel_TAC_03, AndrieuPraly_CDC_2004, Andrieu_SICON_2014}) is a method which permits to design an observer based on weak observability assumptions. A particularly interesting feature of this observer is that its convergence rate can be made as large as requested (see \cite{Andrieu_SICON_2014}).}
%

\changement{
Employing the Luenberger methodology, we  introduce in this paper a novel adaptive observer.
It has the advantage to allow  a prescribed convergence rate. 
Moreover, 
its dimension is only $4n-1$ where $n$ is the order of the system. To the best of our knowledge this is lower than existing algorithms.
Moreover, in contrast to all other available approaches, a strict Lyapunov function is obtained. This allows to give an estimate of the asymptotic estimation error knowing some bounds on the disturbances.
}

\changement{
Compared to the preliminary version of this work which has been presented in \cite{AfriAndrieuBakoDufour_ACC_15},
a study is given which shows how inputs have to be generated in order to ensure convergence of the proposed algorithm. 
%
Finally, this paper can be seen as an extension of the result of \cite{PralyEtAl_MTNS_06_ObsOscillator} in which a nonlinear Luenberger observer is constructed for a harmonic oscillator which fits in the class of the studied systems.}

\changement{The paper is divided in two parts. In a first part, some general statements are given concerning the crucial steps allowing to design a nonlinear Luenberger observer for a linear system with unknown parameters.
More precisely, in Section \ref{Sec_Existence}, the existence of  a mapping $T$ is discussed.
Section \ref{Sec_Injectivity} is devoted to the study of the injectivity of the mapping $T$ assuming some observability properties. An observer is then given in Section \ref{Sec_GenConstrObs} and its robustness is studied.}

\changement{In the second part of the paper, this general framework is then adapted to the particular case of system identification problems.
In Section \ref{Sec_GenInput} a novel notion of differentially exciting system is introduced and compared with existing notions.
This notion allows to describe precisely the kind of input that allows to estimate the parameters and the state.}
\changement{In Section \ref{Sec_AppliIdentif}, a left inverse of the mapping $T$ is constructed to get the observer when considering a specific canonical  structure for the matrices $A$, $B$ and $C$. This leads to a novel solution for the identification of linear time invariant systems.}

\iflong
This paper is the long version of \cite{Afrietal_TAC_2017}.
\fi
\changement{
\longue{To simplify the presentation, most of the proofs are given in the appendix.}
\courte{A long version of the paper which contains the missing proofs and some other comments can be found in \cite{afri:hal-01232747}.}}

{\small
\noindent\textbf{Notations:}
\begin{itemize}
\item Given a matrix $A$ in $\Re^{n\times n}$, $\sigma\{A\}$ denotes its spectrum and $\sigma_{\min}\{A\}$ the eigenvalue with smallest real part. 
\item $\Un_n$ denotes the $n$ dimensional real vector composed of $1$.
\item $I_n$ denotes the $n$ dimensional identity matrix.
\item Given a $C^{j}$ function $u$: $\bar u^{(j)}(t)=\begin{bmatrix}u(t)&\dots&u^{(j)}(t)\end{bmatrix}^\top$.
\item For a vector or a matrix $|\cdot|$ denotes the usual $2$-norm.
\item Given a set $C$, $\texttt{Cl}(C)$  is its closure.
\end{itemize}}

\section{Existence of a nonlinear Luenberger observer for state and parameters estimation}
\changement{\subsection{Problem statement}\label{sec_problem}
A parameterized linear system \longue{described by the following equations} is considered:
\begin{equation}\label{Sys}
\dot x = A(\theta)x + B(\theta)u\ ,\ y=C(\theta)x,
\end{equation}
where $\theta
$ in $\Theta
 \subset\Re^{q}$ is a vector of unknown constant parameters
and $\Theta$ is a known set, $u$ in $\Re$ is a control input.
The state vector $x$ is  in $\Re^{n}$ and $y$ is the measured output in $\Re$.
Mappings $A:\Theta\rightarrow\Re^{n\times n}$, $B:\Theta\rightarrow\Re^{n\times 1}$ and $C:\Theta\rightarrow\Re^{1\times n}$  are known $C^1$ matrix valued functions.}

\changement{In the following, an asymptotic observer for the extended (nonlinear) $n+q$ dimensional system
\begin{equation}\label{eq_ExSyst}
\dot x = A(\theta)x + B(\theta)u\ , \ \dot \theta=0\ ,\ y=C(\theta)x\ 
\end{equation}
has to be designed. Following the approach developed by Luenberger for linear systems in \cite{Luenberger_IEEE_TME_64} which has been extended to nonlinear system in \cite{Shoshitaishvili_TSA_90,KazantzisKravaris_SCL_98,AndrieuPraly_CDC_2004,Andrieu_SICON_2014}, the first step is to design a $C^1$ function $(x,\theta,w)\mapsto T(x,\theta,w)$ such that
the following equation is satisfied:
\begin{equation}\label{EDP}
\begin{array}{lr}
\dfrac{\partial T}{\partial x}(x,\theta,w)\!\!&\!\![A(\theta)x +B(\theta)u] +
\dfrac{\partial T}{\partial w}(x,\theta,w)g(w,u)\\
&\\
&= \Lambda T(x,\theta,w) + L C(\theta)x \,
\end{array}
\end{equation}
where $\Lambda$ is a Hurwitz squared matrix, $L$ a column vector and $g$ is a controlled vector field which is a  degree of freedom  added to take into account the control input.
The dimensions of the matrices and of the vector field $g$ must be chosen consistently.
This will be precisely defined in the sequel. 
The interest in this mapping is highlighted if  $(z(\cdot), w(\cdot))$, the solution of the dynamical system initiated from $(z_0,w_0)$, is considered:
$$
\dot z = \Lambda z + Ly\ , \ \dot w = g(w,u)\ .
$$
Indeed, assuming completeness (of the $w$ part of the solution), for all positive time $t$:
$$
\dot {\overparen{z(t)-T(x(t),\theta,w(t))}} = \Lambda (z(t)-T(x(t),\theta,w(t)))\ .
$$
Hence, due to the fact that $\Lambda$ is Hurwitz, asymptotically it yields
\begin{equation}\label{eq:Cvge-in-imT}
\lim_{t\rightarrow +\infty} |z(t)-T(x(t),\theta,w(t))| = 0\ .
\end{equation}
In other words, $z$ provides an estimate of the function $T$.}

\changement{The second step of the Luenberger design is to left invert the function $T$ in order to reconstruct the extended state $(x,\theta)$ 
from the estimate of $T$.
Hence,  a mapping $T^*$ has to be constructed such that 
\begin{equation}\label{eq:Inverse-Mapping-T}
T^*(T(x,\theta,w),w)=(x,\theta)\ .
\end{equation}
Of course, this property requires the mapping $T$ to be injective.
Then, the final observer is simply
\begin{equation}\label{eq_ObsGen}
\dot z = \Lambda z + Ly\ , \ \dot w = g(w,u)\ ,\ (\hat x,\hat \theta)=T^*(z,w)\ .
\end{equation}}

\subsection{Existence of the mapping $T$}
\label{Sec_Existence}
In \cite{AndrieuPraly_CDC_2004}, it is shown that, in the autonomous case the existence of the mapping $T$, solution of the partial differential equation (PDE) (\ref{EDP}), is obtained for almost all Hurwitz matrices $\Lambda$. \changement{For general controlled nonlinear systems, it is still an open problem to know if it is possible to find a solution.
However, in the particular case of the linear in $x$ controlled system (\ref{eq_ExSyst}), an explicit solution  of the PDE (\ref{EDP}) may be  given.}
\begin{theorem}[Existence of $T$]
Let $r$ be a positive integer.
For all $r$-uplet of negative real numbers $(\lambda_1,\dots,\lambda_r)$ such that, for all $\theta$ in $\Theta$ we have 
\begin{equation}\label{eq_DisjointEV}
\lambda_i\notin\left(\bigcup_{\theta\in\Theta} \sigma\{A(\theta)\} \right) \ ,\ i=1,\dots,r,
\end{equation}
there exists a linear in $x$ function $T:\Re^{n}\times\Theta\times\Re^{r}\rightarrow\Re^{r}$ 
solution to the PDE (\ref{EDP})
with $\Lambda = \diag \{\lambda_1,\dots,\lambda_r\}\ , \ L = \Un_r\,$
and $g:\Re^{r}\times\RR\mapsto \Re^{r}$ defined as $ g(w,u) = \Lambda w + L u $.
\end{theorem}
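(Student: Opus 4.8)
The plan is to exploit the fact that the extended dynamics in (\ref{eq_ExSyst}) is affine in $(x,u)$ and that the chosen auxiliary field $g(w,u)=\Lambda w + Lu$ is affine in $(w,u)$. This suggests looking for a solution of (\ref{EDP}) that is linear in $x$ (as the statement requires) and affine in $w$, namely of the form
\begin{equation}\label{eq:ansatzT}
T(x,\theta,w) = M(\theta)\,x + P(\theta)\,w,
\end{equation}
where $M(\theta)\in\Re^{r\times n}$ and $P(\theta)\in\Re^{r\times r}$ are $C^1$ matrix valued functions to be determined. With this ansatz $\frac{\partial T}{\partial x}=M(\theta)$ and $\frac{\partial T}{\partial w}=P(\theta)$ do not depend on $(x,w,u)$, so substituting into (\ref{EDP}) produces an identity that is affine in the three independent quantities $x$, $w$ and $u$.

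First I would match the coefficients of $x$, $w$ and $u$ separately, which is legitimate because (\ref{EDP}) must hold for all $(x,w,u)$. The term in $x$ gives the Sylvester equation
\begin{equation}\label{eq:sylv}
M(\theta)A(\theta) - \Lambda M(\theta) = L\,C(\theta),
\end{equation}
the term in $w$ gives the commutation condition $P(\theta)\Lambda = \Lambda P(\theta)$, and the term in $u$ gives $M(\theta)B(\theta) + P(\theta)L = 0$. Allowing an extra constant part $N(\theta)$ in (\ref{eq:ansatzT}) would force $\Lambda N(\theta)=0$, hence $N(\theta)=0$ since $\Lambda$ is Hurwitz, so it can be dropped. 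The whole statement thus reduces to solving (\ref{eq:sylv}) and then choosing $P(\theta)$ consistently.

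Because $\Lambda=\diag\{\lambda_1,\dots,\lambda_r\}$ is diagonal, I would solve (\ref{eq:sylv}) row by row rather than invoke the general Sylvester theory. Denoting by $m_i(\theta)^\top$ the $i$-th row of $M(\theta)$ and using $L=\Un_r$ (so that the $i$-th row of $L\,C(\theta)$ is $C(\theta)$), the $i$-th row of (\ref{eq:sylv}) reads $m_i(\theta)^\top\big(A(\theta)-\lambda_i I_n\big) = C(\theta)$. Assumption (\ref{eq_DisjointEV}) guarantees $\lambda_i\notin\sigma\{A(\theta)\}$ for every $\theta\in\Theta$, so $A(\theta)-\lambda_i I_n$ is invertible and
\begin{equation}\label{eq:rows}
m_i(\theta)^\top = C(\theta)\big(A(\theta)-\lambda_i I_n\big)^{-1},
\end{equation}
which is $C^1$ in $\theta$ since $A$, $B$, $C$ are $C^1$ and matrix inversion is smooth on the set of invertible matrices. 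It then remains to take $P(\theta)$ diagonal with $i$-th diagonal entry equal to $-m_i(\theta)^\top B(\theta)$: such a $P(\theta)$ automatically commutes with the diagonal $\Lambda$, and it satisfies $P(\theta)L = -M(\theta)B(\theta)$, which closes the $u$-equation.

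The main obstacle to anticipate is the input term. A first naive guess $T=M(\theta)x$ solves the $x$-equation but leaves an uncancelled contribution $M(\theta)B(\theta)u$ on the left-hand side of (\ref{EDP}); this is precisely why the extra coordinate $w$ with dynamics $\dot w=\Lambda w + Lu$ is introduced, and the real content of the argument is that one can choose $P(\theta)$ to simultaneously commute with $\Lambda$ and absorb $M(\theta)B(\theta)u$ via the $u$-matching condition. Once (\ref{eq:rows}) is established, verifying that (\ref{eq:ansatzT}) satisfies (\ref{EDP}) is a direct substitution.
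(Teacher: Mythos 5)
Your proposal is correct and arrives at exactly the paper's construction: solving the Sylvester equation $M(\theta)A(\theta)-\Lambda M(\theta)=LC(\theta)$ row by row gives $M_i(\theta)=C(\theta)(A(\theta)-\lambda_i I_n)^{-1}$, and your diagonal $P(\theta)$ with entries $-M_i(\theta)B(\theta)$ yields componentwise $T_i(x,\theta,w_i)=M_i(\theta)[x-B(\theta)w_i]$, which is precisely the paper's formula (\ref{eq_Ti}). The only difference is presentational: you derive the solution by an ansatz and coefficient matching, while the paper writes down $T_i$ and verifies the scalar PDE directly.
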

\begin{proof}
Keeping in mind that the spectrum of $\Lambda$ and $A(\theta)$ are disjoint as required by (\ref{eq_DisjointEV}), let us introduce the matrix $M_i(\theta)$ in $\Re^{1 \times n}$  defined by 
$$
M_i(\theta) =C(\theta) (A(\theta)-\lambda_i I_{n})^{-1}\ 
$$
for all $i$ in $\{1,\dots, r\}$. 
Let $T_i:\Re^{n}\times\Theta\times\Re\rightarrow\Re$ 
be defined as:
\begin{equation}\label{eq_Ti}
T_i(x,\theta,w_i) = M_i(\theta)[x-B(\theta)w_i]\ .
\end{equation}
Let also the vector field $g_i:\Re\times \Re\rightarrow\Re$ be defined as
\begin{equation}\label{eq_gi}
g_i(w_i,u)  = \lambda_i w_i + u\ .
\end{equation}
It can be noticed that $T_i$ is solution to the PDE
$$
\begin{array}{lr}
\dfrac{\partial T_i}{\partial x}(x,\theta,w_i)\!\!&\!\![A(\theta)x+B(\theta)u] +\dfrac{\partial T_i}{\partial w_i}(x,\theta,w_i)g_i(w_i,u)\\
&\\
& = \lambda_i T_i(x,\theta,w_i) + C(\theta)x\ .
\end{array}
$$
Hence, the solution of the PDE (\ref{EDP}) is simply taken as
\begin{equation}\label{eq_T}
T(x,\theta,w)  = \begin{bmatrix}T_1(x,\theta,w_1)&\dots& T_r(x,\theta,w_r)\end{bmatrix}^\top\ .
\end{equation}
This ends the proof.
\end{proof}

\longue{\begin{remark}
Note that in the particular case in which the system is autonomous, the mapping $T$ is given as  
\begin{equation}\label{def_TAut}
T_o(x,\theta) = M(\theta)x\ ,\ M(\theta) = \begin{bmatrix}
M_1(\theta)\\\vdots\\M_r(\theta)
\end{bmatrix} \ .
\end{equation}
This matrix $M(\theta)$ is solution to the following parameterized Sylvester equation
\begin{equation}\label{eq_algLuenberger}
M(\theta)A(\theta) = \Lambda M(\theta) + LC(\theta) \ .
\end{equation}
Hence, taking $r=n$, the well known Luenberger observer introduced in \cite{Luenberger_IEEE_TME_64} in the case of autonomous systems is recovered.
Note however that, here, the injectivity is more involved than in the context of  \cite{Luenberger_IEEE_TME_64} since $\theta$ is unknown.
\end{remark}}
\changement{\begin{remark}
Note that if the set $\Theta$ is bounded, then it is ensured that there exist $(\lambda_i)$'s which satisfy equation (\ref{eq_DisjointEV}).\longue{ Indeed, if $\Theta$ is bounded, then the set $\left(\bigcup_{\theta\in\Theta} \sigma\{A(\theta)\} \right)$ is a bounded set. This can be obtained from the fact that  each eigenvalue $\lambda$ in $\sigma\{A(\theta)\}$ is a zero of the characteristic polynomial:
\begin{equation}\label{CarPoly}
\lambda^n + \mu_1(\theta) \lambda^{n-1} + \cdots + \mu_{n-1}(\theta)\lambda + \mu_n(\theta)=0,
\end{equation}
where $\mu_i(\theta)$ are continuous functions of $\theta$. Boundedness of $\Theta$ together with the continuity of the $\mu_i$'s imply that there is $c>0$ such that $|\mu_i(\theta)|\leq c$ $\forall i\in\{1,\ldots,n\}$, $\forall\theta\in \Theta$. As a consequence if $|\lambda|>1$, we must have 
$$
|\lambda|\leq \sum_{j=1}^n|\mu_j(\theta)| |\lambda|^{1-j}\leq \dfrac{c}{1-1/|\lambda|}
$$
which hence implies that $|\lambda|\leq c+1$. }
\end{remark}}

\subsection{Injectivity of the mapping $T$}
\label{Sec_Injectivity}
As seen in the previous section, it is known that if the following dynamical extension is considered: 
\begin{equation}\label{eq_DynExt}
\dot z = \Lambda z + L y\  , \: \dot w = \Lambda w + L u
\end{equation}
with $z$ in $\Re^{r}$ and $w$ in $\Re^{r}$, then it yields that along the solution of the system defined by (\ref{eq_ExSyst}) and (\ref{eq_DynExt}), equation (\ref{eq:Cvge-in-imT}) is true.
Consequently, $T(x,\theta,w)$ defined in (\ref{eq_Ti})-(\ref{eq_T}) is asymptotically estimated. 
The question that arises is whether this information is sufficient to get the knowledge of $x$ and $\theta$.
This is related to the injectivity property of this mapping.
As shown in \cite{AndrieuPraly_CDC_2004}, in the autonomous case this property is related to the observability of the extended system (\ref{eq_ExSyst}).
With observability, it is sufficient to take $r$  large enough to get injectivity.
%
Here, the same type of result holds if it is assumed  an observability uniform with respect to the input in a specific set.

The following strong observability assumption is made:

\begin{assumption}[Uniform differential injectivity]\label{Ass_UnifDiffInjec}
 There exist two bounded open subsets  $\CR_\theta$ and $\CR_x$ which  closures are respectively in $\Theta$ and $\Re^n$, an integer $r$ and $U_r$ a bounded subset of $\Re^{r-1}$ such that the mapping
 $$
\HH_r(x,\theta,v) = H_r(\theta)x + \sum_{j=1}^{r-1}S^j H_r(\theta)B(\theta)v_{j-1}\ ,
 $$
 with 
 $$ H_r(\theta)=\begin{bmatrix} C(\theta)^\top & (C(\theta)A(\theta))^\top & \cdots & (C(\theta)A(\theta)^{r-1})^\top \end{bmatrix}^\top \ ,$$
 $v=(v_0,\dots,v_{r-2})$ and $S$ is the shift matrix operator such that for all $s=(s_1, \dots, s_r)$, $S\times s = (0,s_1, \dots s_{r-1})$ is injective in $(x,\theta)$, uniformly in $v\in U_r$ and full rank. More precisely, there exists a positive real number $L_\HH$ such that for all $(x,\theta)$ and $(x^*,\theta^*)$ both in $\texttt{Cl}(\mathcal C_\theta)\times \texttt{Cl}(\mathcal C_x)$ and all $v$ in $U_r$
 $$
\left |\HH_r(x^*,\theta^*,v)-\HH_r(x,\theta,v)\right | \geq L_\HH\left |\begin{bmatrix} x-x^*\\ \theta-\theta^*\end{bmatrix}\right |\ .
 $$
 \end{assumption}
The following result establishes an injectivity property for large eigenvalues of the observer.
\begin{theorem}\label{Theo_InjCont}
Assume Assumption \ref{Ass_UnifDiffInjec} holds. Let $u(\cdot)$ be a bounded $C^{r-1}([0,+\infty])$ function with bounded $r-1$ first derivatives, i.e. there exists a positive real number $\mathfrak u$ such that
\begin{equation}\label{eq_Boundu}
|\bar u^{(r-1)}(t)|\leq \mathfrak u\ , \ \forall t\geq 0\ .
\end{equation}
For all $r$-uplet of distinct negative real numbers $(\tilde \lambda_1, \dots, \tilde \lambda_{r})$, for all positive time $\tau$ and for all $w_0$ in $\Re^r$, there exist two positive real numbers $k^*$ and $\bar{L}_T$ such that for all $k>k^*$, the mapping defined in (\ref{eq_Ti})-(\ref{eq_T}) with $\lambda_i=k \tilde \lambda_i$, $i=1, \dots, r$ satisfies the following injectivity property in $\CR=\CR_x\times\CR_\theta$. For all $t_1\geq \tau$, if $\bu^{(r-2)}(t_1)$ is in $U_r$, then for all $(x,\theta)$ and $(x^*,\theta^*)$ in $\CR_x\times\CR_\theta$
the following inequality holds:
\begin{equation}\label{eq_InjTcontr}
\left |T(x,\theta,w(t_1))-T(x^*,\theta^*,w(t_1))\right | \geq
 \frac{\bar{ L}_T}{k^r}\left |\begin{bmatrix} x-x^*\\ \theta-\theta^*\end{bmatrix}\right |
\end{equation}
where $w(\cdot)$ is the solution of the $w$ dynamics in \eqref{eq_DynExt} initiated from $w_0$.
\end{theorem}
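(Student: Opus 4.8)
The plan is to show that, once $k$ is large, $T(\cdot,\cdot,w(t_1))$ is—up to a controlled remainder—a \emph{fixed invertible linear image} of the map $\HH_r$ from Assumption \ref{Ass_UnifDiffInjec}, so that the injectivity of $\HH_r$ transfers to $T$ with the announced $k^{-r}$ scaling. Write $\lambda_i=k\tilde\lambda_i$ and set $\alpha_i=1/(k\tilde\lambda_i)$. Since $\CL(\CR_\theta)$ is compact and $A(\cdot)$ is $C^1$, there is $a>0$ with $|A(\theta)|\le a$ on $\CL(\CR_\theta)$, so for $k$ beyond a first threshold the Neumann series converges uniformly and
\[
M_i(\theta)=C(\theta)(A(\theta)-k\tilde\lambda_i I_n)^{-1}=-\sum_{m=0}^{\infty}\alpha_i^{\,m+1}C(\theta)A(\theta)^m ,
\]
whose first $r$ terms reproduce exactly the rows $C(\theta)A(\theta)^{m}$ of $H_r(\theta)$, the tail being $O(k^{-(r+1)})$ uniformly in $\theta$, together with its $\theta$-derivative.

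Next I would expand the exciting signal. Solving $\dot w_i=k\tilde\lambda_i w_i+u$ from $w_{i,0}$ and integrating by parts $r-1$ times gives
\[
w_i(t_1)=-\sum_{j=0}^{r-2}\alpha_i^{\,j+1}u^{(j)}(t_1)+\rho_i ,
\]
where $\rho_i$ collects the last integral remainder, bounded by $\mathfrak u/(k|\tilde\lambda_i|)^{\,r}=O(k^{-r})$ thanks to \eqref{eq_Boundu}, plus boundary terms proportional to $e^{k\tilde\lambda_i t_1}$. For $t_1\ge\tau$ the latter are $O(e^{k\tilde\lambda_i\tau})$, i.e. smaller than any power of $1/k$; this is precisely where $k^*$ is allowed to depend on $\tau$ and $w_0$.

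Substituting both expansions into $T_i=M_i(\theta)x-M_i(\theta)B(\theta)w_i(t_1)$ and collecting powers of $\alpha_i$, the coefficient of $-\alpha_i^{\ell}$ for $1\le\ell\le r$ is exactly
\[
C(\theta)A(\theta)^{\ell-1}x+\sum_{j=0}^{\ell-2}C(\theta)A(\theta)^{\ell-2-j}B(\theta)\,u^{(j)}(t_1)=\bigl[\HH_r(x,\theta,\bu^{(r-2)}(t_1))\bigr]_\ell ,
\]
using $v=\bu^{(r-2)}(t_1)$, which by hypothesis lies in $U_r$. Hence, stacking $i=1,\dots,r$,
\[
T(x,\theta,w(t_1))=-\bar V D_k^{-1}\,\HH_r(x,\theta,\bu^{(r-2)}(t_1))+E_k(x,\theta),
\]
with $D_k=\diag\{k,\dots,k^r\}$, $[\bar V]_{i\ell}=\tilde\lambda_i^{-\ell}$ a $k$-independent matrix, and $E_k$ gathering all remaining terms. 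Since $\bar V$ factors as $\diag\{\tilde\lambda_i^{-1}\}$ times a Vandermonde matrix in the $\tilde\lambda_i^{-1}$, the distinctness of the $\tilde\lambda_i$ makes $\bar V$ invertible.

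Finally I would combine these facts. Subtracting the identities at $(x,\theta)$ and $(x^*,\theta^*)$ in $\CR_x\times\CR_\theta$ and using $|D_k^{-1}\xi|\ge k^{-r}|\xi|$ for $k\ge1$,
\[
|T(x,\theta,w(t_1))-T(x^*,\theta^*,w(t_1))|\ge\frac{\sigma_{\min}\{\bar V\}}{k^{r}}\,|\Delta\HH_r|-|\Delta E_k| ,
\]
where $\Delta\HH_r$ and $\Delta E_k$ denote the increments of $\HH_r$ and $E_k$. Assumption \ref{Ass_UnifDiffInjec} bounds $|\Delta\HH_r|$ below by $L_\HH\,|[x-x^*;\theta-\theta^*]|$, while the mean value inequality bounds $|\Delta E_k|$ above by $O(k^{-(r+1)})\,|[x-x^*;\theta-\theta^*]|$; choosing $k^*$ large enough that the remainder is dominated yields \eqref{eq_InjTcontr} with $\bar L_T=\tfrac12\sigma_{\min}\{\bar V\}L_\HH$. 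The hard part is the uniform $O(k^{-(r+1)})$ control of $E_k$ \emph{and of its $(x,\theta)$-gradient}—including the $\theta$-derivative of the Neumann tail and the exponentially small transient terms—since it is this estimate that both forces $k^*$ to depend on $\tau$ and $w_0$ and guarantees that the remainder does not destroy the injectivity inherited from $\HH_r$.
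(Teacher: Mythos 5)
Your proposal is correct and follows essentially the same route as the paper's own proof: Neumann expansion of the resolvent defining $M_i(\theta)$, expansion of $w_i(t_1)$ in the derivatives of $u$ with the exponentially small transients controlled via $t_1\geq\tau$, identification of $T$ as an invertible Vandermonde-type matrix times $\diag\{k^{-1},\dots,k^{-r}\}$ applied to $\HH_r$ plus a remainder whose $(x,\theta)$-Lipschitz constant is $O(k^{-(r+1)})$, and conclusion by the inverse triangle inequality combined with Assumption \ref{Ass_UnifDiffInjec}. The differences are purely notational: your $\bar V D_k^{-1}$ and $E_k$ are the paper's $\tilde{\mathcal V}\KR$ and $R_T$, and you bound below with $\sigma_{\min}\{\bar V\}$ where the paper uses $1/\left|\tilde{\mathcal V}^{-1}\right|$.
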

The proof of this result is reported in Appendix \ref{Sec_ProofTheoInjCont}.

\begin{remark}
Note that in the case in which the control input is such that for all $t\geq 0$, $\bu^{(r-2)}(t)$ is in $U_r$, the inequality (\ref{eq_InjTcontr}) can be rewritten by removing the time dependency. More precisely, by introducing $\CR_w$ a subset of $\Re^{r}$ defined as
$$
\CR_w = \bigcup_{t\geq t_1}\{w(t)\}, 
$$
The inequality \eqref{eq_InjTcontr} can be restated as follows: 
for all $(x,\theta)$ and $(x^*,\theta^*)$ in $\CR_x\times\CR_\theta$ and all $w$ in $\CR_w$, 
$$
\left |T(x,\theta,w)-T(x^*,\theta^*,w)\right | \geq
 \frac{\bar L_T}{k^r} \left |\begin{bmatrix} x-x^*\\ \theta-\theta^*\end{bmatrix}\right |\ .
$$
\end{remark}

\subsection{Construction of the observer}
\label{Sec_GenConstrObs}
From the existence of an injective function $T$ solution to the PDE (\ref{EDP}), it is possible to formally define a nonlinear Luenberger observer as in equation (\ref{eq_ObsGen}).
Note however that the mapping $T^*$ solution of (\ref{eq:Inverse-Mapping-T}) has to be designed.
Following the approach introduced in \cite{RapaportMaloum_IJRNC_04}, the Mc-Shane  formula can be used (see \cite{Mcshane_BAMS_34} and more recently \cite{MarconiPraly_TAC_08}).

Indeed, assuming we have in hand a function $T$ uniformly injective, then the following proposition holds.
\begin{proposition}
\label{Prop_MacShane}
If there exist bounded open sets $\CR_x$ and $\CR_\theta$ and a set $\CR_w$ such that for all $(x,\theta)$ and $(x^*,\theta^*)$ both in $\CL(\CR_x)\times\CL(\CR_\theta)$ and $w$ in $\CR_w$ 
\begin{equation}\label{eq_InjTcontrGen}
\left |T(x,\theta,w)-T(x^*,\theta^*,w\right) | \geq
 L_T \left | \begin{bmatrix}
 x-x^*\\ 
 \theta-\theta^*\end{bmatrix}
 \right |\ ,
\end{equation}
then the mapping $T^*:\RR^{r}\times \CR_w\rightarrow\RR^n\times\Theta$, 
$T^*(z,w)= \left((T^*_{x_i}(z,w) )_{1\leq i\leq n},  (T^*_{\theta_j}(z,w) )_{1\leq j\leq q}\right)$ 
defined by
\begin{align}
T_{x_i}^*(z,w) &= \inf_{(x,\theta) \in \CL(\CR_x\times\CR_\theta)} \left\{x_i + \frac{1}{L_T}|T(x,\theta,w)-z|\right\}\ , \label{Tstar-X}\\
T_{\theta_j}^*(z,w) &= \inf_{(x,\theta) \in \CL(\CR_x\times\CR_\theta)} \left\{\theta_j + \frac{1}{L_T}|T(x,\theta,w)-z|\right\}, \label{Tstar-theta}
\end{align}
satisfies for all $(z,x,\theta,w)$ in $\RR^{r}\times\CR_x\times\CR_\theta\times \CR_w$
\begin{equation}\label{LipschitzTstar}
\left|T^*(z,w)-\begin{bmatrix}x\\\theta\end{bmatrix}\right| \leq \frac{\sqrt{n+q}}{L_T}\, |z-T(x,\theta,w)|\ .
\end{equation}
\end{proposition}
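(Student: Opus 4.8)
The plan is to recognize this as a McShane--Whitney Lipschitz extension argument and to reduce the vector estimate \eqref{LipschitzTstar} to a collection of scalar bounds, one for each of the $n+q$ coordinate functions, which are then reassembled through the Euclidean norm. Fix $(z,x,\theta,w)$ in $\RR^{r}\times\CR_x\times\CR_\theta\times\CR_w$. Since $\CR_x$ and $\CR_\theta$ are open, the point $(x,\theta)$ itself lies in $\CL(\CR_x\times\CR_\theta)$, so it is an admissible competitor in every infimum defining $T^*$. I would treat a single scalar component, say $T_{x_i}^*$, establish a two-sided estimate $|T_{x_i}^*(z,w)-x_i|\le \frac1{L_T}|T(x,\theta,w)-z|$, carry out the identical argument for each $T_{\theta_j}^*$, and only at the end combine the $n+q$ component bounds.

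For the upper bound on $T_{x_i}^*(z,w)$, I would simply evaluate the infimand in \eqref{Tstar-X} at the admissible point $(x,\theta)$, giving immediately $T_{x_i}^*(z,w)\le x_i+\frac1{L_T}|T(x,\theta,w)-z|$. The lower bound is the substantive step. For an arbitrary competitor $(x',\theta')$ in $\CL(\CR_x\times\CR_\theta)$ I would apply the triangle inequality in the form
\[
|T(x',\theta',w)-z|\ \ge\ |T(x,\theta,w)-T(x',\theta',w)|-|T(x,\theta,w)-z|,
\]
then invoke the uniform injectivity hypothesis \eqref{eq_InjTcontrGen} to bound the first term from below by $L_T\bigl|[\,x-x';\,\theta-\theta'\,]\bigr|$, and finally use that a single coordinate difference is dominated by the full vector norm, $\bigl|[\,x-x';\,\theta-\theta'\,]\bigr|\ge |x_i-x_i'|$. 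Substituting into the infimand yields
\[
x_i'+\tfrac1{L_T}|T(x',\theta',w)-z|\ \ge\ x_i'+|x_i-x_i'|-\tfrac1{L_T}|T(x,\theta,w)-z|\ \ge\ x_i-\tfrac1{L_T}|T(x,\theta,w)-z|,
\]
where the last inequality is $x_i'+|x_i-x_i'|\ge x_i$. Taking the infimum over $(x',\theta')$ preserves this lower bound, so together with the upper bound one obtains $|T_{x_i}^*(z,w)-x_i|\le \frac1{L_T}|T(x,\theta,w)-z|$, and the verbatim argument applied to \eqref{Tstar-theta} gives $|T_{\theta_j}^*(z,w)-\theta_j|\le \frac1{L_T}|T(x,\theta,w)-z|$ for each $j$.

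To conclude I would sum the squares of the $n+q$ scalar estimates,
\[
\left|T^*(z,w)-\begin{bmatrix}x\\\theta\end{bmatrix}\right|^2=\sum_{i=1}^{n}|T_{x_i}^*-x_i|^2+\sum_{j=1}^{q}|T_{\theta_j}^*-\theta_j|^2\le (n+q)\,\frac1{L_T^2}\,|T(x,\theta,w)-z|^2,
\]
and take square roots to recover exactly \eqref{LipschitzTstar}. The argument is essentially self-contained once \eqref{eq_InjTcontrGen} is in hand; there is no genuine analytic obstacle. The only points demanding care are bookkeeping ones: ensuring that $(x,\theta)$ is a legitimate point of the compact set over which the infimum runs (so the infimum is attained and finite), and correctly tracking that passing from the full-vector injectivity constant $L_T$ to a per-coordinate estimate costs nothing beyond the final dimensional factor $\sqrt{n+q}$, which is precisely the source of that constant in the statement.
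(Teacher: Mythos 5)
Your proof is correct and is precisely the McShane--Whitney extension argument that the paper invokes (by citation to McShane and to Rapaport--Maloum) without writing out: evaluate the infimand at $(x,\theta)$ for the upper bound, use the triangle inequality plus the uniform injectivity \eqref{eq_InjTcontrGen} and the coordinate bound $\left|\begin{bmatrix}x-x'\\ \theta-\theta'\end{bmatrix}\right|\geq |x_i-x_i'|$ for the lower bound, then assemble the $n+q$ scalar estimates into the factor $\sqrt{n+q}$. No gaps; the only implicit point worth noting is the identification $\CL(\CR_x\times\CR_\theta)=\CL(\CR_x)\times\CL(\CR_\theta)$, which holds and makes the competitor set match the set where \eqref{eq_InjTcontrGen} is assumed.
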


Note that one of the drawback of the suggested construction for $T^*$ is that this one is based on a minimization algorithm and hence may lead to numerical problems.
An alternative solution has been investigated in \cite{BernardAndrieuPraly_15_HAL_NonLinObsOriCoord} (see also \cite{AndrieuEytardPraly_CDC_14_DynExtInvObs}) to overcome this optimization step but it is still an open question to employ these tools in this context.

Moreover, in Section \ref{Sec_AppliIdentif}, when considering a particular structure of the matrices $A$, $B$ and $C$, an explicit function $T^*$ which does not rely on an optimization is given.

\subsection{Robustness}
\label{Sec_GenRobustess}
In this section, the robustness of the proposed algorithm is investigated.
Note that contrary to most of existing identification algorithms, the convergence result of the current identifier does not rely on LaSalle  invariance principle (as this is the case for instance in \cite{Kreisselmeier_TAC_77,Q_Zhang_2002,NarendraAnnaswamy_Book_89}.
Indeed, considering the function $V:\CR_x\times \RR^r\times \CR_w\rightarrow \RR_+$ 
defined by
\begin{equation}\label{strictLyapFunc}
V(x,\theta, z,w) = |z-T(x,\theta,w)|\ .
\end{equation}
assuming that inequality (\ref{eq_InjTcontrGen}) holds, this implies that
$$
V(x,\theta, z,w) \geq L_T\left|\begin{bmatrix}
x\\
\theta\end{bmatrix}-T^*(z,w)\right|\ .
$$
Along the trajectories of the system, it yields
$$
\dot{\overparen{V(x,\theta, z,w)}}\leq \max_{i=1,..,r}\{\lambda_i\}\, V(x,\theta, z,w)\ ,
$$
with $\lambda_i<0$. 
In other words, $V$ is a strict Lyapunov function associated to the observer.

This allows to give an explicit characterization of the robustness in term of input-to-state stability gain.
Indeed,  consider now the case in which we add three time functions $\delta_x$, $\delta_\theta$ and $\delta_y$ in $\mathcal{L}^\infty_{loc}(\Re_+)$ to the system (\ref{eq_ExSyst}) such that we consider the system
\begin{equation}\label{eq_DistSyst}
\dot x = A(\theta) x + B(\theta)u + \delta_x \ , \ \dot \theta = \delta_\theta \ , \ y = C(\theta) + \delta_y
\end{equation}
where $(\delta_x,\delta_\theta, \delta_y)$ are time functions of appropriate dimensions.

Following the same approach, we consider the observer (\ref{eq_ObsGen}) with the function $T^*$ given in \eqref{Tstar-X}-\eqref{Tstar-theta}.

\begin{proposition}[Robustness]\label{Prop_robust}
Let $\CR_x$, $\CR_\theta$ and $\CR_w$ be three bounded open sets which closure is respectively in $\RR^n$, $\Theta$ and $\RR^r$.
Consider the mapping $T$ given in (\ref{eq_Ti}). Assume that there exist three positive real numbers $L_T$, $L_x$ and $L_\theta$ such that (\ref{eq_InjTcontrGen}) is satisfied
and for all $(x,\theta,w)$ in $\CR_x\times \CR_\theta\times\CR_w$
$$
\left | \frac{\partial T}{\partial x}(x,\theta,w) \right | \leq L_x 
\ ,\ 
\left | \frac{\partial T}{\partial \theta}(x,\theta,w) \right | \leq L_\theta\ ,
$$
then considering  the observer (\ref{eq_ObsGen}) with the function $T^*$ given in \eqref{Tstar-X}-\eqref{Tstar-theta} it yields along the solutions of system (\ref{eq_DistSyst}) the following inequality for all $t$ positive such that $(x(t),\theta(t),w(t))$ is in $\CR_x\times\CR_\theta\times\CR_w$.
\begin{multline}\label{EstimErrorBound}
\left|\begin{bmatrix}\theta(t)-\hat \theta(t)\\
x(t)-\hat x(t)\end{bmatrix}\right|  
\leq 
\\\dfrac{\sqrt{n+q}}{L_T}\exp\left(\max_{i=1\dots,r}\{\lambda_i\} t\right) \left |z(0)-T(x(0),\theta(0),w(0))\right | \\+ \dfrac{\sqrt{n+q}\left(\sup_{s\in[0,t]}\{L_x|\delta_x(s)| + L_\theta |\delta_{\theta}(s)| + \sqrt{r}|\delta_y(s)|\}\right)}{L_T\max_{i=1\dots,r}\{|\lambda_i|\}}.
\end{multline}
\end{proposition}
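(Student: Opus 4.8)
The plan is to make rigorous the strict-Lyapunov argument sketched in Section \ref{Sec_GenRobustess}, turning the decay estimate into a quantitative input-to-state stability bound. I introduce the error $e = z - T(x,\theta,w)$ together with the candidate $V = |z-T(x,\theta,w)|$ of \eqref{strictLyapFunc}. The first step is to differentiate $e$ along the solutions of the disturbed plant \eqref{eq_DistSyst} and the observer \eqref{eq_ObsGen}. Using $\dot z = \Lambda z + L(C(\theta)x + \delta_y)$ and expanding $\frac{d}{dt}T$ by the chain rule with $\dot x = A(\theta)x + B(\theta)u + \delta_x$, $\dot\theta = \delta_\theta$ and $\dot w = g(w,u)$, the combination $\frac{\partial T}{\partial x}[A(\theta)x + B(\theta)u] + \frac{\partial T}{\partial w}g(w,u)$ collapses through the PDE \eqref{EDP} to $\Lambda T + LC(\theta)x$. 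The nominal output-injection term $LC(\theta)x$ cancels, leaving the perturbed linear error dynamics
\[
\dot e = \Lambda e + L\delta_y - \frac{\partial T}{\partial x}\delta_x - \frac{\partial T}{\partial \theta}\delta_\theta .
\]

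Second, I would estimate the growth of $V=|e|$. Since $\Lambda = \diag\{\lambda_1,\dots,\lambda_r\}$ with all $\lambda_i<0$, one has $e^\top\Lambda e \le \max_i\{\lambda_i\}|e|^2$, while Cauchy--Schwarz together with $|L| = |\Un_r| = \sqrt r$ and the gradient bounds $|\partial T/\partial x|\le L_x$, $|\partial T/\partial\theta|\le L_\theta$ controls the perturbation term. Taking the upper right Dini derivative of $V$ (which is what handles the nonsmoothness of $|\cdot|$ at the origin) yields
\[
D^+V \le \max_{i}\{\lambda_i\}\,V + L_x|\delta_x| + L_\theta|\delta_\theta| + \sqrt r\,|\delta_y| .
\]
Applying the comparison lemma to this scalar differential inequality produces the ISS estimate
\[
V(t) \le e^{\max_i\{\lambda_i\}t}V(0) + \frac{1}{|\max_i\{\lambda_i\}|}\sup_{s\in[0,t]}\{L_x|\delta_x(s)| + L_\theta|\delta_\theta(s)| + \sqrt r|\delta_y(s)|\},
\]
the asymptotic gain coming from $\int_0^t e^{\max_i\{\lambda_i\}(t-s)}\,ds \le 1/|\max_i\{\lambda_i\}|$; this reproduces the transient and steady-state contributions appearing in \eqref{EstimErrorBound}.

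Finally, I would convert the bound on $V=|z-T(x,\theta,w)|$ into a bound on the estimation error. Because the injectivity hypothesis \eqref{eq_InjTcontrGen} holds on $\CL(\CR_x)\times\CL(\CR_\theta)\times\CR_w$, Proposition \ref{Prop_MacShane} applies, and its conclusion \eqref{LipschitzTstar} gives $\left|T^*(z,w)-(x,\theta)^\top\right| \le \frac{\sqrt{n+q}}{L_T}\,V$. Substituting the Lyapunov estimate and recalling $(\hat x,\hat\theta)=T^*(z,w)$ yields \eqref{EstimErrorBound} at every time for which the trajectory lies in $\CR_x\times\CR_\theta\times\CR_w$. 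I expect the main subtlety to be twofold: justifying the differentiation of the nonsmooth $V=|e|$, which I would treat via the Dini derivative and the decoupled componentwise structure $\dot e_i = \lambda_i e_i + p_i(t)$; and the fact that both \eqref{eq_InjTcontrGen} and the gradient bounds are only valid while the state remains in the prescribed bounded sets, which is exactly why the conclusion must be stated conditionally on $(x(t),\theta(t),w(t))\in\CR_x\times\CR_\theta\times\CR_w$.
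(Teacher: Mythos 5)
Your proposal is correct and follows essentially the same route as the paper: derive the perturbed error dynamics $\dot e = \Lambda e - \frac{\partial T}{\partial x}\delta_x - \frac{\partial T}{\partial \theta}\delta_\theta + \Un_r\delta_y$ from the PDE \eqref{EDP}, obtain an exponential-plus-supremum bound on $|e|$, and convert it into an estimation-error bound via Proposition \ref{Prop_MacShane}, i.e.\ inequality \eqref{LipschitzTstar}. The only difference is mechanical: the paper integrates the linear error equation explicitly (variation of constants), while you run a Dini-derivative and comparison-lemma argument on $V=|e|$; the two computations are equivalent and give the same estimate.

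One point deserves attention. Your derivation yields the gain $1/\left|\max_{i}\{\lambda_i\}\right| = 1/\min_{i}\{|\lambda_i|\}$, since $\int_0^t e^{\max_i\{\lambda_i\}(t-s)}\,ds \le 1/\left|\max_i\{\lambda_i\}\right|$ and all $\lambda_i$ are negative. The statement \eqref{EstimErrorBound}, however, displays $\max_{i}\{|\lambda_i|\}$ in the denominator, which is a strictly stronger claim whenever the $\lambda_i$ are not all equal in modulus. The paper's own proof performs exactly your integral estimate and then writes $\max_i\{|\lambda_i|\}$ without justification, so this appears to be a slip in the paper rather than a gap on your side; but your closing claim that your bound ``reproduces'' \eqref{EstimErrorBound} is literally inaccurate --- what you (correctly) prove is the inequality with $\min_i\{|\lambda_i|\}$ in place of $\max_i\{|\lambda_i|\}$, and that is the defensible version of the statement.
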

\longue{\begin{proof}
Note that along the solutions of system (\ref{eq_DistSyst}) and (\ref{eq_ObsGen}), it yields for all $t\geq 0$
$$
\begin{array}{ll}
\dot {\overparen{z-T(x,\theta,w)}}\!\!&\!\!= \Lambda (z-T(x,\theta,w)) \\
&\\
&- \dfrac{\partial T}{\partial x}(x,\theta,w)\delta_x(t) 
- \dfrac{\partial T}{\partial \theta}(x,\theta,w)\delta_\theta(t)
+ \Un_r\delta_y.
\end{array}
$$
The solution of this last equation is given as 
\begin{align*}
z(t)-T(x(t),\theta,w(t))\!&\!=\exp\left(\Lambda t\right)(z(0)-T(x(0),\theta(0),w(0))\\ 
&+\int_0^t\exp\left(\Lambda (t-s)\right)\left(-\frac{\partial T}{\partial x}(x,\theta,w)\delta_x(s)\right. \\ 
&\left. - \frac{\partial T}{\partial \theta}(x,\theta,w)\delta_\theta(s)
+ \Un_r\delta_y\right) ds
\end{align*}
Hence, the norm $\left |z(t)-T(x(t),\theta(t),w(t))\right |$ is upper bounded as
\begin{align*}
\left|z(t)-T(x(t),\right.\!&\!\left.\theta(t),w(t))\right|\leq\int_0^t \exp\left(\max_{i=1\dots,r}\{\lambda_i\} (t-s)\right)ds \\
&\times\sup_{s\in[0,t]}\{L_x|\delta_x(s)| + L_\theta |\delta_{\theta}(s)| + \sqrt{r}|\delta_y(s)|\}\\
&+\exp\left(\max_{i=1\dots,r}\{\lambda_i\} t\right) \left |z(0)-T(x(0),\theta(0),w(0)\right | \\ 
&\leq \dfrac{\sup_{s\in[0,t]}\{L_x|\delta_x(s)| + L_\theta |\delta_{\theta}(s)| + \sqrt{r}|\delta_y(s)|\}}{\max_{i=1\dots,r}\{|\lambda_i|\}}\\
&+\exp\left(\max_{i=1\dots,r}\{\lambda_i\} t\right) \left |z(0)-T(x(0),\theta(0),w(0)\right |
\end{align*}
Consequently with the function $T^*$ defined in \eqref{Tstar-X}-\eqref{Tstar-theta}, it yields from Proposition \ref{Prop_MacShane} equation  (\ref{LipschitzTstar}) that the result holds.
\end{proof}
}
\courte{\changement{
The proof is based on the use of the strict Lyapunov function (\ref{strictLyapFunc}). This one can be found in \cite{afri:hal-01232747}.
\begin{remark}
Following the proof of Theorem \ref{Theo_InjCont}, it can be seen that  $L_T = \frac{\bar L_T}{k^r}$. Moreover, from (\ref{EstimErrorBound}), it can be checked that increasing the speed of convergence (by increasing the eigenvalues factor $k$) of the observer has the consequence of reducing its robustness to output and state perturbations.
\end{remark}
}}
\longue{\begin{remark}
It may be interesting to see how the constants $L_T$, $L_x$ and $L_\theta$ behave when the eigenvalues of the observer are multiplied  by a positive real number $k$.
Following the proof of Theorem \ref{Theo_InjCont}, it can be seen that  $L_T = \frac{\bar L_T}{k^r}$. Moreover, it can be checked that the following estimation can be made: 
$$
\left | \frac{\partial T}{\partial x}(x,\theta,w) \right | \leq \frac{C_x}{k} 
\ ,\ 
\left | \frac{\partial T}{\partial \theta}(x,\theta,w) \right | \leq \frac{C_\theta}{k}\ 
$$
with $C_\theta$ and $C_x$ denoting some constant numbers. 
As a consequence, the previous bound becomes
\begin{multline}
\left|\begin{bmatrix}\theta(t)-\hat \theta(t)\\
x(t)-\hat x(t)\end{bmatrix}\right| \leq \\
\dfrac{k^r\sqrt{n+q}}{\bar L_T}\exp\left(k\max_{i=1\dots,r}\{\dlambda_i\} t\right) \left |z(0)-T(x(0),\theta(0),w(0)\right | \\
+ \dfrac{k^{r}\sqrt{n+q}\sup_{s\in[0,t]}\left\{\dfrac{C_x}{k}|\delta_x(s)| + \dfrac{C_\theta}{k} |\delta_{\theta}(s)| + \sqrt{r}|\delta_y(s)|\right\}}{\bar L_T\max_{i=1\dots,r}\{|\dlambda_i|\}}\ .
\end{multline}
From this estimate, we conclude that increasing the speed of convergence (by increasing the eigenvalues factor $k$) of the observer has the consequence of reducing its robustness to output and state perturbations.
\end{remark}}

\section{Application to system identification problems}
\label{Sec_AppliIdentif}

\subsection{Considered realization}
\label{sec_ConsideredSystIdentif}
In the previous section, it has been shown that based on a differential observability assumption and its associated set of \text{good} inputs $U_r$, it is possible to design a robust observer which reconstructs the state and the unknown parameters of a linear system in the form \eqref{Sys} as long as the input remains in $U_r$.

Note however that this observer relies on the construction of a mapping $T^*$ given in \eqref{Tstar-X}-\eqref{Tstar-theta} which requires a nonlinear (and probably non convex) optimization.
In this section, a particular canonical structure for system \eqref{Sys} is considered.
This allows to give an explicit construction of a mapping $T^*$ left inverse of $T$. Moreover, it allows to give a complete characterization of the dimension of the observer and the class of inputs which guarantee that the differential observability property (i.e. Assumption \ref{Ass_UnifDiffInjec}) holds.

The considered particular canonical structure for the matrix-valued functions $A$, $B$, $C$ is given as follows.
\begin{equation}\label{eq:canonical-form}
A(\theta)=\left[\!\!\begin{array}{cc}\begin{array}{c}\theta_a \end{array}\!\! & \begin{array}{:c}I_{n-1}\\\hdashline 0\end{array}\end{array}\!\! \right] \ , ~~~~ B(\theta)=\theta_b\ , ~~~~C=e_1^\top
\end{equation}
where 
$$ e_1=\begin{bmatrix} 1&0&\cdots&0 \end{bmatrix}^\top\in \RR^{n \times 1}\ , ~~~~ \theta=\begin{bmatrix}\theta_a^\top&\theta_b^\top\end{bmatrix}^\top\in \RR^{2n \times 1}$$
Note that assuming the structures \eqref{eq:canonical-form} for $A,B,C$ is without loss of generality: any input-output behavior of a linear SISO system can be described with a model of  this structure \changement{(maybe after a linear change of coordinates)}. Such a realization  is observable  for any vector $\theta$. 

The interest of this structure is twofold:
\begin{enumerate}
\item it is possible to select $r$ and to characterize the class of input such that Assumption \ref{Ass_UnifDiffInjec} is satisfied.
\item it is possible to give explicitly a candidate for the mapping $T^*$ which allows us to define a complete algorithm.
\end{enumerate}

The following two subsections are devoted to addressing these two points. The complete identification algorithm is given at the end of this section.

\subsection{Input generation in order to satisfy the Assumption \ref{Ass_UnifDiffInjec}}
\label{Sec_GenInput}

It is usual that in adaptive control and in identification problem the class of input considered is sufficiently exciting. This means that the signal has to be composed of a sufficiently large number of frequencies such that some integrals are positive definite. 
The characterization of a \textit{good} input is now well understood for discrete time systems.
For instance, as mentioned in \cite{Goodwin_Book_14_Adaptive}, a sequence of input $(u(k))_{k\in\NN}$ is sufficiently rich of order $p$ if there exist $m\in \mathbb{N}$ and $\rho>0$ such that the following inequality holds for all integer $k$
$$
\sum_{i=k}^{k+m}\begin{bmatrix}
u(i)& 
\dots &
u(i+p-1)
\end{bmatrix}^\top
\begin{bmatrix}
u(i)& 
\dots &
u(i+p-1)
\end{bmatrix}\geq \rho I_p.
$$
There have been some attempts to extend this assumption to continuous time systems (see \cite{Janecki_87_SCL_persistency} or \cite{ShimkinFeuer_87_SCLpersistency}).
In the context of this paper, the approach is different.
The assumption we make on the input is that sufficient information is obtained from its successive time derivatives. To be more precise,  given an integer $r$ and a vector $v=(v_0,\dots,v_{2r})$ in $\Re^{2r+1}$  we introduce $\mat_r(v)$ the $(r+1)\times (r+1)$ (Hankel) real matrix defined as
\begin{equation}
\mat_r(v) = \begin{bmatrix}
v_0 & v_1 & \dots & v_{r}\\
v_1 & v_2 & \dots & v_{r+1}\\
\vdots&\vdots&\ddots&\vdots\\
v_{r} & v_{r+1} & \dots & v_{2r}
\end{bmatrix}
\end{equation}
With this notation, we can now define the notion of \textit{differentially exciting inputs}.
\begin{definition}[Differentially exciting function]
\label{def:diff-exciting}
A  $C^{2r}$ function $u:\RR\mapsto\RR$ is said to be differentially exciting of order $r$ at time $t$ if the matrix $\mat_r(\bar u^{(2r)}(t))$ is invertible.
\end{definition}
As  it will be shown in the following proposition, there is a link between this property and the property of persistency of excitation for continuous time system (as introduced for instance in \cite{ShimkinFeuer_87_SCLpersistency}). 
\begin{proposition}[Link with persistency of excitation]\label{Prop_LinkPersis}
Let $u:\RR\rightarrow\RR$ be a $C^{2r}$ function which is differentially exciting of order $r$ at time $t$. Then there exist two positive real numbers $\epsilon(t)$ and $\rho(t)$ such that 
\begin{equation}\label{eq_StandardDiffExc}
\int_{t}^{t+\epsilon(t)}
\bar{u}^{(r)}(s)
\big(\bar {u}^{(r)}(s)\big)^\top
ds 
\geq \rho(t) I\ .
\end{equation}
\end{proposition}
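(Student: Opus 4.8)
The plan is to recognize the integral as a Gram matrix and to connect its nondegeneracy to the invertibility of the Hankel matrix $\mat_r(\bar u^{(2r)}(t))$ through repeated differentiation. Write $M(\epsilon) = \int_t^{t+\epsilon}\bar u^{(r)}(s)\big(\bar u^{(r)}(s)\big)^\top ds$, an $(r+1)\times(r+1)$ symmetric positive semidefinite matrix. For any $c\in\RR^{r+1}$,
$$
c^\top M(\epsilon)\, c = \int_t^{t+\epsilon}\big(c^\top \bar u^{(r)}(s)\big)^2\, ds \geq 0,
$$
with equality if and only if the continuous map $s\mapsto c^\top\bar u^{(r)}(s)=\sum_{k=0}^r c_k u^{(k)}(s)$ vanishes identically on $[t,t+\epsilon]$. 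Hence $M(\epsilon)$ is positive definite precisely when $u,\dot u,\dots,u^{(r)}$ are linearly independent on $[t,t+\epsilon]$. First I would fix any $\epsilon(t)>0$ (admissible since $u\in C^{2r}$ on all of $\RR$) and reduce the claim to establishing this linear independence.

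The key step, and the place where differential excitation enters, is to show that a nontrivial dependence relation on $[t,t+\epsilon(t)]$ contradicts the invertibility of $\mat_r(\bar u^{(2r)}(t))$. Suppose $c\neq 0$ and $\psi(s):=\sum_{k=0}^r c_k u^{(k)}(s)\equiv 0$ on $[t,t+\epsilon(t)]$. Since $u\in C^{2r}$, the function $\psi$ is $C^r$ and, being identically zero on the interval, all its derivatives up to order $r$ vanish there; evaluating the (one-sided) derivatives at $s=t$ gives, for $m=0,\dots,r$,
$$
0=\psi^{(m)}(t)=\sum_{k=0}^r c_k\, u^{(k+m)}(t)=\sum_{k=0}^r \big(\mat_r(\bar u^{(2r)}(t))\big)_{m,k}\, c_k,
$$
that is, $\mat_r(\bar u^{(2r)}(t))\, c = 0$. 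As the matrix is invertible by hypothesis, $c=0$, a contradiction. This computation also explains why $C^{2r}$ regularity is exactly what is needed: the highest term appearing is $u^{(2r)}(t)$, the last entry of $\bar u^{(2r)}(t)$.

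Having ruled out any dependence relation, $M(\epsilon(t))$ is positive definite, so I would set $\rho(t)$ equal to its smallest eigenvalue, which is strictly positive, yielding $M(\epsilon(t))\geq \rho(t)\, I$ and proving \eqref{eq_StandardDiffExc}. The argument is short once the reduction to linear independence is made; the only genuine content is the differentiation identity above, which shows that the Hankel matrix of successive derivatives is nothing but the matrix of the derivatives at $t$ of a putative constant-coefficient dependence relation. The one technical care to take is that $t$ is the left endpoint of $[t,t+\epsilon(t)]$, so the derivatives of $\psi$ at $t$ must be read as right-hand derivatives; these coincide with the ordinary ones since $u$ is $C^{2r}$ on a full neighborhood of $t$.
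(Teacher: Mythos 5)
Your proof is correct, and it takes a genuinely different route from the paper's. You view the integral as a Gram matrix and rule out a nontrivial dependence relation $\psi=\sum_{k=0}^r c_k u^{(k)}\equiv 0$ on $[t,t+\epsilon]$ by differentiating it $r$ times at the left endpoint, which yields exactly $\mat_r(\bar u^{(2r)}(t))\,c=0$ and contradicts differential excitation; positive definiteness then holds for \emph{every} $\epsilon(t)>0$, and you take $\rho(t)$ to be the smallest eigenvalue. The paper instead Taylor-expands $\bar u^{(r)}(t+s)$ about $t$ and factors the integral as $\mat_r(\bar u^{(2r)}(t))\,P(\epsilon,t)\,\mat_r(\bar u^{(2r)}(t))^\top$, where $P(\epsilon,t)$ is a scaled Hilbert matrix plus a perturbation whose norm is controlled by bounds on the higher derivatives of $u$ and by $|\mat_r(\bar u^{(2r)}(t))^{-1}|$, and proves $P$ positive definite for $\epsilon$ sufficiently small. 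Your argument is shorter, requires no smallness of $\epsilon$, and uses exactly the stated $C^{2r}$ regularity (the paper's integral-form remainder actually invokes $u^{(2r+1)}$, one derivative more than assumed); what it does not provide is any quantitative handle on $\epsilon(t)$ and $\rho(t)$. That quantitative structure is what supports the remark following the paper's proof: when $\mat_r(\bar u^{(2r)}(t))^\top\mat_r(\bar u^{(2r)}(t))\geq \rho_u I$ uniformly in $t$ and the relevant derivatives of $u$ are bounded, the paper's construction lets $\epsilon$ and $\rho$ be chosen independent of $t$, which is the uniform persistency needed for the identification application; your choice of $\rho(t)$ as a Gram-matrix eigenvalue gives no such uniformity without additional work.
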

\courte{
\changement{
The proof of this proposition is available in \cite{afri:hal-01232747}.
}}

\longue
{
The proof of this proposition is given in Appendix \ref{Sec_ProofPropLinkPersis}.
\begin{remark}
As seen in the proof of the proposition, when $\left (\mat_r(\bar u^{(2r)}(t))\right )^\top \mat_r(\bar u^{(2r)}(t)) \geq \rho_u I$ with $\rho_u$ independent of $t$ and when the first $2r+1$ derivatives of $u$ are bounded for all $t$, $\epsilon$ may not depend on $t$.
This implies that inequality (\ref{eq_StandardDiffExc}) can be made uniform in time.
\end{remark}
}
The interest we have in inputs satisfying the differential exciting property is that if at each time this property is satisfied for $r=2n$, then  the mapping $\HH_{4n-1}$ satisfies Assumption \ref{Ass_UnifDiffInjec} when we restrict attention to sets $\Theta$ of coefficients $\theta=[\thetaa, \thetab]$ for which the couple $(A(\thetaa), B(\thetab))$ is controllable.
\begin{proposition}\label{Prop_AssIndentif}
Let $\CR_x$ be a bounded open set in $\Re^n$.
Let $\CR_\theta$ be a bounded open set with closure  in $\Theta$. Let $(A(\cdot),B(\cdot),C(\cdot))$ have the  structure (\ref{eq:canonical-form}) and be such that for all $\theta=(\thetaa,\thetab)$ in $\CL(\CR_\theta)$ the couple 
$(A(\thetaa), B(\thetab))$ is controllable. 
Let $U_{4n}$ be a compact subset of $\Re^{4n-1}$ such that for all $v=(v_0,\dots,v_{4n-2})$ in $U_{4n}$ the matrix $\mat_{n-1}(v)$ is invertible.
Then Assumption \ref{Ass_UnifDiffInjec} is satisfied.
More precisely there exists a positive real number $L_\HH$ such that for all $(x,\theta)$ and $(x^*,\theta^*)$ both in $\texttt{Cl}(\mathcal C_\theta)\times \texttt{Cl}(\mathcal C_x)$ and all $v$ in $U_{4n}$
$$
\left |\HH_{4n-1}(x^*,\theta^*,v)-\HH_{4n-1}(x,\theta,v)\right | \geq L_\HH		\left |\begin{bmatrix} x-x^*\\ \theta-\theta^*\end{bmatrix}\right |\ .
 $$
\end{proposition}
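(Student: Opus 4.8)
The plan is to read $\HH_{4n-1}(x,\theta,v)$ as the stacked vector of the output and its first $4n-2$ derivatives. Using the identity $y^{(k)} = C A(\theta)^k x + \sum_{j=0}^{k-1} C A(\theta)^{k-1-j} B(\theta)\, u^{(j)}$ and letting $v_j$ play the role of $u^{(j)}$, the $k$-th component of $\HH_{4n-1}(x,\theta,v)$ is exactly $CA^k x$ plus the corresponding forced term. First I would record the structural consequences of the canonical form \eqref{eq:canonical-form}: since $C=e_1^\top$ and $A(\theta)$ is in observable companion form, $CA^k = e_{k+1}^\top + (\text{terms in } e_1,\dots,e_k)$ for $k\le n-1$, so $H_n(\theta)$ is lower triangular with unit diagonal, hence invertible for every $\theta$; moreover the sequence $(CA^k)_k$ and the Markov parameters $h_i(\theta)=CA^iB(\theta)$ obey the linear recurrence associated with the characteristic polynomial $s^n-\sum_{l=1}^n\theta_{a,l}s^{n-l}$.

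The core is pointwise injectivity: for a fixed $v$ with $\mat_{n-1}(v)$ invertible and a fixed controllable $\theta$, I would show that $\HH_{4n-1}(x,\theta,v)=\HH_{4n-1}(x^*,\theta^*,v)$ forces $(x,\theta)=(x^*,\theta^*)$. To eliminate the state I form the denominator residuals
\[
\rho_m := y^{(n+m)} - \sum_{l=1}^n \theta_{a,l}\, y^{(n+m-l)}, \qquad m=0,\dots,3n-2 .
\]
By Cayley--Hamilton the free part $CA^k x$ is annihilated, and a short computation using the Markov recurrence collapses the forced part to $\rho_m = \sum_{q=0}^{n-1} c_q\, v_{m+q}$, where $c=(c_0,\dots,c_{n-1})$ are the numerator coefficients, a linear bijective image of $\theta_b$ once $\theta_a$ is fixed (the bijection being where minimality, which here reduces to the assumed controllability since observability is automatic, is used). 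Writing the same residuals for $(x^*,\theta^*)$ against the common output vector and subtracting yields the homogeneous linear system
\[
\sum_{l=1}^n \Delta\theta_{a,l}\, y^{(n+m-l)} + \sum_{q=0}^{n-1}\Delta c_q\, v_{m+q}=0, \qquad m=0,\dots,3n-2 ,
\]
in the $2n$ unknowns $(\Delta\theta_a,\Delta c)=(\theta_a-\theta_a^*,\,c-c^*)$.

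The main obstacle is to prove that this $(3n-1)\times 2n$ system has full column rank, i.e. that it forces $\Delta\theta_a=0$ and $\Delta c=0$; this is precisely where both hypotheses enter and must be combined. The $\Delta c$-block is Hankel in $v$ and contains $\mat_{n-1}(v)$ as its top $n\times n$ minor, so its invertibility controls the numerator part, while controllability (equivalently coprimeness of $N$ and $D=s^n-\sum_l\theta_{a,l}s^{n-l}$) rules out a nontrivial $(\Delta\theta_a,\Delta c)$ through a Sylvester/B\'ezout-type argument: the relation above, read as an extra identity $\tilde D(\tfrac{d}{dt})y+\tilde N(\tfrac{d}{dt})u=0$ with $\deg \tilde D,\tilde N\le n-1$, combined with the system's own law $Dy=Nu$ and with the excitation, forces $\tilde D N=\tilde N D$ and hence $\tilde D=\tilde N=0$. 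Getting the bookkeeping of this step right---matching the number $4n-1$ of available derivatives to the window $m=0,\dots,3n-2$ and to the sizes of the Hankel and resultant matrices---is the delicate point.

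Once $\Delta\theta_a=0$ and $\Delta c=0$, I recover $\theta_b=\theta_b^*$ from the numerator bijection and then $x=x^*$ by inverting the unit-triangular $H_n(\theta)$ on the free parts $y^{(k)}-\sum_{j}CA^{k-1-j}B v_j$, $k=0,\dots,n-1$. This establishes pointwise injectivity. To upgrade it to the uniform quantitative bound of Assumption \ref{Ass_UnifDiffInjec}, I would invoke compactness: $\HH_{4n-1}$ is polynomial, $\CL(\CR_\theta)\times\CL(\CR_x)$ and $U_{4n}$ are compact, and full column rank of $\partial\HH_{4n-1}/\partial(x,\theta)$ (obtained from the same elimination read infinitesimally) together with global injectivity yields a positive uniform constant $L_\HH$. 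Equivalently, $L_\HH$ can be produced by hand as a uniform lower bound on the smallest singular values of the matrices inverted along the way ($\mat_{n-1}(v)^{-1}$, the Sylvester matrix, and $H_n(\theta)^{-1}$), all of which are continuous in $(\theta,v)$ on the compact set and therefore bounded below.
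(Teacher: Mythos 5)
Your proposal follows essentially the same route as the paper's proof: read the components of $\HH_{4n-1}$ as output derivatives obeying the companion recurrence, subtract the two copies of that recurrence to get a homogeneous linear system in the parameter differences, eliminate the $y$-dependence by applying the characteristic polynomial, and conclude by combining an excitation (Hankel) condition with coprimeness of numerator and denominator, which is where controllability enters. The paper phrases the last step as invertibility of the Sylvester resultant matrix $\mathcal M(\thetaa,\thetab)$ acting on $\Delta$, you phrase it as a B\'ezout argument forcing $\tilde D N=\tilde N D$ and then $\tilde D=\tilde N=0$; these are the same fact. Your treatment of the Jacobian (``the same elimination read infinitesimally'') and the compactness upgrade to a uniform $L_\HH$ also match the paper's Part 2 and its appeal to Lemma 3.2 of the cited SICON reference.

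However, the step you explicitly defer as ``the delicate point'' is exactly where the proof lives, and as you have set it up it does not close. With $\psi=\tilde D(\tfrac{d}{dt})y+\tilde N(\tfrac{d}{dt})u$ known to vanish only at orders $m=0,\dots,3n-2$, you can form $(D(\tfrac{d}{dt})\psi)$ at order $\ell$ only for $\ell=0,\dots,2n-2$, since each application of $D$ consumes $n$ further orders; this yields $2n-1$ linear conditions $\sum_{j=0}^{2n-1}g_jv_{\ell+j}=0$ on the $2n$ coefficients $g$ of the resultant polynomial $\tilde D N+\tilde N D$, and a $(2n-1)\times 2n$ homogeneous system can never force $g=0$, whatever $v$ is. Moreover, the only excitation you invoke, invertibility of the $n\times n$ corner $\mat_{n-1}(v)$, is in any case too weak: killing $g\in\RR^{2n}$ requires the full $2n\times 2n$ Hankel matrix $[v_{i+j}]_{0\le i,j\le 2n-1}$ built from $(v_0,\dots,v_{4n-2})$ to be invertible, whereas the $n\times n$ corner only serves, once $\Delta\theta_a=0$ is already established, to kill $\Delta c$. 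The paper's own proof uses $3n$ difference equations, i.e.\ it runs one derivative further and effectively works with the components of $\HH_{4n}$ (consistently with its hypothesis $v=(v_0,\dots,v_{4n-2})\in\RR^{4n-1}$, and despite its ``$\mat_{n-1}(v)$'' notation), ending with the $2n\times 2n$ Hankel multiplying the invertible Sylvester matrix. So to complete your argument you must extend the residual window to $m=3n-1$ and strengthen the excitation hypothesis to the $2n\times2n$ Hankel; this off-by-one tension is admittedly present in the statement itself, but your write-up inherits rather than resolves it. A minor point besides: for this realization the map $\theta_b\mapsto c$ is the identity (the I/O law is $D(\tfrac{d}{dt})y=N(\tfrac{d}{dt})u$ with the coefficients of $N$ exactly the entries of $\theta_b$), so no minimality is needed for that bijection; controllability is used only for coprimeness, i.e.\ in the Sylvester/B\'ezout step.
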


The proof of this proposition is given in Appendix \ref{Sec_ProofPropAssIndetif}.

A natural question that arises from the former Proposition is whether or not it is possible to generate an input which satisfies the differentially exciting property. As shown in the following proposition, inputs having such property may be easily generated by observable and conservative linear systems. 
\begin{lemma}[Generation of differentially exciting input]
\label{lem:exciting-inputs}
Consider the linear system
\begin{equation}\label{eq:example-exciting-inputs}
\dot v = Jv\ ,\ u=Kv\quad v(0)=v_0
\end{equation}
 with $v$ in $\RR^{2r}$ and $J$ being  an invertible skew adjoint matrix with distinct eigenvalues
 and $K$ a matrix such that the couple $(J,K)$ is observable.
Then there exists $v_0$ in $\RR^{2r}$ such that $u$ is differentially exciting of order $2r-1$ for all time.
\end{lemma}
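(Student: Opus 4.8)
The plan is to exploit the explicit solution $v(t)=e^{Jt}v_0$ and to read the successive derivatives of $u$ off the powers of $J$. Writing $N=2r$ for the state dimension, we have $u(t)=Kv(t)$ and, since $v$ solves $\dot v=Jv$,
$$
u^{(k)}(t)=KJ^k v(t)\ ,\quad k\geq 0\ .
$$
The quantity to be made invertible is the $N\times N$ Hankel matrix $\mat_{N-1}(\bar u^{(2N-2)}(t))$, whose $(p,q)$ entry (with $p,q$ running from $0$ to $N-1$) is exactly $u^{(p+q)}(t)=KJ^{p+q}v(t)$. The first step is to recognise this matrix as a product: setting
$$
\mathcal{W}=\begin{bmatrix} K\\ KJ\\ \vdots\\ KJ^{N-1}\end{bmatrix}\ ,\quad
\Phi(t)=\begin{bmatrix} v(t) & Jv(t) & \cdots & J^{N-1}v(t)\end{bmatrix}\ ,
$$
one has $(\mathcal{W}\Phi(t))_{pq}=(KJ^{p})(J^{q}v(t))=KJ^{p+q}v(t)$, hence $\mat_{N-1}(\bar u^{(2N-2)}(t))=\mathcal{W}\,\Phi(t)$.

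It follows that the Hankel matrix is invertible if and only if both $N\times N$ factors are. The matrix $\mathcal{W}$ is precisely the observability matrix of the pair $(J,K)$, hence invertible by assumption and independent of $t$, so everything reduces to the factor $\Phi(t)$. Here I would use that $J$ commutes with $e^{Jt}$, so that $J^{q}v(t)=e^{Jt}J^{q}v_0$ and therefore
$$
\Phi(t)=e^{Jt}\,\Phi(0)\ ,\qquad \Phi(0)=\begin{bmatrix} v_0 & Jv_0 & \cdots & J^{N-1}v_0\end{bmatrix}\ .
$$
Since $e^{Jt}$ is invertible for every $t$, the rank of $\Phi(t)$ does not depend on $t$, and the problem collapses to choosing $v_0$ so that the Krylov matrix $\Phi(0)$ is invertible, i.e. so that $v_0$ is a cyclic vector of $J$.

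The remaining and main point is the existence of such a real $v_0$, which is exactly where the distinct-eigenvalue hypothesis is used. Being real and skew adjoint, $J$ has a spectrum made of $r$ conjugate pairs of purely imaginary numbers, and these being distinct and nonzero ($J$ is invertible) forces the real minimal polynomial of $J$ --- a product of the distinct irreducible quadratics $\lambda^2+\omega_k^2$ --- to have degree $N$ and thus to coincide with the characteristic polynomial. Consequently $J$ is non-derogatory and admits a cyclic vector over $\RR$; equivalently, $v_0\mapsto\det\Phi(0)$ is a polynomial that is not identically zero, so its zero set is a proper algebraic subset of $\RR^{N}$, hence of Lebesgue measure zero. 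Choosing any $v_0$ outside that set makes $\Phi(0)$, hence $\Phi(t)$, hence $\mat_{N-1}(\bar u^{(2N-2)}(t))$, invertible for every $t$, which is the assertion that $u$ is differentially exciting of order $N-1=2r-1$ at all times.
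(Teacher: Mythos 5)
Your proof is correct, and its skeleton is the same as the paper's: factor the $2r\times 2r$ Hankel matrix $\mat_{2r-1}(\bar u^{(4r-2)}(t))$ as the product of the observability matrix of $(J,K)$ with the Krylov matrix $\Phi(t)=\begin{bmatrix} v(t) & Jv(t) & \cdots & J^{2r-1}v(t)\end{bmatrix}$, dispose of the first factor by the observability hypothesis, and reduce the invertibility of the second to the cyclicity of $v_0$ for $J$. The two points where you deviate are both sound. For time-invariance, you factor $\Phi(t)=e^{Jt}\Phi(0)$ outright, where the paper runs a contradiction argument with a polynomial $p$ of degree less than $2r$ satisfying $p(J)e^{Jt}v_0=0$; both rest on the same commutation fact, yours being slightly more direct. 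For the existence of a cyclic $v_0$, the paper passes to the real normal form $J=\diag\{S(\omega_1),\dots,S(\omega_r)\}$ and exhibits the explicit vector $v_0=\begin{bmatrix}0&1&0&1&\cdots&0&1\end{bmatrix}^\top$, whereas you invoke non-derogatoriness (minimal polynomial equal to characteristic polynomial, forced by the $2r$ distinct nonzero eigenvalues) and conclude by genericity: $v_0\mapsto\det\Phi(0)$ is a not-identically-zero polynomial, so its zero set has measure zero. Since the lemma only claims existence of some $v_0$, this is fully adequate. The trade-off is worth noting: the paper's constructive choice is precisely what it reuses immediately after the lemma to justify that multisine inputs $u(t)=\sum_{i=1}^r\alpha_i\sin(\omega_i t)$ are differentially exciting, a byproduct your non-constructive argument does not deliver; in exchange, your argument establishes the stronger fact that almost every initial condition $v_0$ works.
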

\courte{
\changement{
The proof of this Lemma can be found in \cite{afri:hal-01232747}.
}}

\longue{
\begin{proof}
Direct calculations show that
$$\mat_r(\bar u^{(4r-2)}(t))=\begin{bmatrix}K\\ KJ\\ \vdots\\ KJ^{2r-1}\end{bmatrix}\begin{bmatrix}v(t) &Jv(t) & \cdots & J^{2r-1}v(t) \end{bmatrix}.$$
System  \eqref{eq:example-exciting-inputs} being observable, invertibility of the matrix $\mat_r(\bar u^{(4r-2)}(t))$ is obtained if the second matrix is full rank for some $v_0$. 
To this end, note $J$ being skew adjoint and invertible there exist $\omega_i$, $i=1,\ldots,r$,  real positive and distinct numbers such that $J$ can be written (in some specific coordinates) in the form 
$$J=\texttt{Diag}\{S(\omega_1),\cdots,S(\omega_r)\}\in \mathbb{R}^{2r\times 2r}\ ,$$
where 
$$S(\omega_i)=\begin{bmatrix}0&\omega_i\\-\omega_i&0 \end{bmatrix} \ .$$
The minimal polynomial of such a matrix $J$  has degree equal to its dimension $2r$. As a consequence, there exists a nonzero vector $v_0$ such that $\begin{bmatrix}
v_0 & Jv_0 &\dots J^{2r-1}v_0
\end{bmatrix}$
is non singular. For example, it can be verified that $v_0=\begin{bmatrix}0&1&0&1&\cdots& 0&1 \end{bmatrix}^\top$ (i.e., with one entry out of two  equal to $1$) fulfills the condition. 
Let the initial state $v_0$ of \eqref{eq:example-exciting-inputs} be selected so as to satisfy this condition. Then the state  trajectory of \eqref{eq:example-exciting-inputs} is defined by 
$v(t) = e^{Jt}v_0 $ 
with $$e^{Jt}=\texttt{Diag}\{e^{S(\omega_1)t},\cdot\cdot,e^{S(\omega_r)t}\}\ , e^{S(\omega_i)t}= \begin{bmatrix}\!\cos(\omega_i t)\!&\!\sin(\omega_it)\!\\\!-\sin(\omega_it)\!&\!\cos(\omega_i t)\!\end{bmatrix} $$
%
We then claim that for any $t$, $\begin{bmatrix}
v(t) & Jv(t) &\dots J^{2r-1}v(t)
\end{bmatrix}$
is also non singular. To see this, suppose for contradiction that the matrix in question is singular. Then there is a nonzero polynomial $p(z)$ of degree less than  $2r$ such that
$p(J)e^{Jt}v_0=0$. Since $e^{Jt}$ commutes with any polynomial of $J$, we have $e^{Jt}p(J)v_0=0$ which in turn implies that $p(J)v_0=0$ because $e^{Jt}$ is invertible. But the last equality  contradicts the assumption made on $v_0$. 
\end{proof}
}
Lemma \ref{lem:exciting-inputs}  can be employed to  select signals that fulfill the differentially exciting property. For example it follows from this lemma that a multisine signal of the form 
 $u(t)=\sum_{i=1}^r\alpha_i\sin(\omega_i t) $
where $\alpha_i\neq 0\:  \forall i$, $\omega_i\neq 0\: \forall i$ and $\omega_i\neq \omega_k$ for $i\neq k$, is differentially exciting of order $2r-1$. Indeed, the multisine signal  corresponds to the situation when 
$K=\begin{bmatrix}\bar{\alpha}_1 & \cdots & \bar{\alpha}_r \end{bmatrix}$, $\bar{\alpha}_i=\begin{bmatrix}\alpha_i & 0 \end{bmatrix}$,  $v_0=\begin{bmatrix}0&1&0&1&\cdots& 0&1 \end{bmatrix}^\top$ and $J$ defined as \longue{in the proof of Lemma \ref{lem:exciting-inputs}.}
\courte{$$J=\texttt{Diag}\{S(\omega_1),\cdots,S(\omega_r)\}\in \mathbb{R}^{2r\times 2r}\ ,~~ S(\omega_i)=\begin{bmatrix}0&\omega_i\\-\omega_i&0 \end{bmatrix} \ .$$}

\subsection{Explicit candidate for the mapping $T^*$}
\label{Sec_ExplicitTstar}
\changement{Another interest of the canonical structure given in \eqref{eq:canonical-form} is that it leads to a simple expression of the left inverse  $T^*$ of the mapping $T$.
Indeed, as shown in the Appendix, it is possible to show that the function $T$ satisfies the following equality for all $(x,\theta,w)$,
\begin{equation}\label{def_Pi}
T_i(x,\theta,w)=\underbrace{\left[ \begin{array}{ccc}  V_i^\top & T_i(x,\theta,w) V_i^\top &-w_iV_i^\top\end{array}\right]}_{P_i(T_i,w_i)}
\begin{bmatrix}x \\ \theta_a \\ \theta_b\end{bmatrix}  
\end{equation}
with $V_{i} = -\begin{bmatrix}
\dfrac{1}{\lambda_i}& \dots & \dfrac{1}{\lambda_i^n}
\end{bmatrix}^\top$.
The former equality can be rewritten
$$
T(x,\theta) = P(z,w)\begin{bmatrix}x \\ \theta_a \\ \theta_b\end{bmatrix} 
$$
with $z=T(x,\theta)$ and $P(z,w)=\begin{bmatrix}P_1(z_1,w_1)^\top & \cdots &P_{r}(z_{r},w_{r})^\top  \end{bmatrix}^\top$.
From this, we see that a natural candidate for a left inverse of $T$ is simply to apply a left inverse to the matrix $P$. This left inverse does not require any optimization step. Note however, that there may exist some point 
$(z,w)$ in which this matrix is not full rank. This implies that the left inverse obtained following this route may not be continuous and this is the price to pay to get a constructive solution. 
However, since it is known that $z$ converges asymptotically to $\im T$, it may be shown that after a transient period $z$ reaches the set in which $P$ becomes left invertible.
\longue{A solution to avoid discontinuity has been deeply investigated in \cite{PralyEtAl_MTNS_06_ObsOscillator}
considering an autonomous second order system with only one parameter.
It is an open question to know if these tools could be applied in the current context.}
The result which is obtained is the following.}
\begin{proposition}[Explicit $T^*$]
\label{Prop_ExplTinv}
Let $\CR_x$, $\CR_\theta$ and $\CR_w$ be three bounded open sets which closure are respectively in $\RR^n$, $\RR^{2n}$ and $\RR^r$.
Let $r$ be a positive integer and a $r$-uplet of negative real numbers $(\lambda_1,\dots,\lambda_r)$ such that
\eqref{eq_DisjointEV} holds. Consider the associated mapping 
$T:\CR_x\times \CR_\theta\times \CR_w\rightarrow\RR^r$
given in (\ref{eq_Ti}) and assume that there exists a positive real number $L_T$, such that (\ref{eq_InjTcontrGen}) is satisfied for all $(x,\theta)$ and $(x^*,\theta^*)$ both in $\CL(\CR_x)\times\CL(\CR_\theta)$ and $w$ in $\CR_w$.
Then there exist three positive real numbers $p_{\min}$, $\epsilon_T$ and $L_{T^*}$ such that the function 
\begin{equation}\label{ExpTInv}
T^*(z,w) \!=\! \left\{\!\begin{array}{ll}
( P(z,w)^\top P(z,w))^{-1}P(z,w)^\top z \!&\! \text{if  } P^\top P\geq p_{\min}I_{3n}
\\
0  \!&\! \text{elsewhere}
\end{array}
\right.
\end{equation}
is well defined and satisfies for all $(z,w,x,\theta)$ such that $|z-T(x,\theta,w)|\leq \epsilon_T$ the following inequality
\begin{equation}\label{Est_TinvExp}
\left|T^*(z,w)-\begin{bmatrix}x\\\theta\end{bmatrix}\right| \leq L_{T^*}\, |z-T(x,\theta,w)|\ .
\end{equation}
\end{proposition}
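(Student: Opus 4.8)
The plan is to read $T^*$ as the ordinary least–squares pseudo–inverse of the linear–in–$\xi$ representation (\ref{def_Pi}) and to convert the functional injectivity (\ref{eq_InjTcontrGen}) into a uniform full–column–rank bound for $P$ evaluated along the image of $T$. Throughout write $\xi=\begin{bmatrix}x^\top&\theta_a^\top&\theta_b^\top\end{bmatrix}^\top\in\RR^{3n}$ (so that $\xi=\begin{bmatrix}x^\top&\theta^\top\end{bmatrix}^\top$) and $\bar z=T(x,\theta,w)$. The identity (\ref{def_Pi}) then reads $\bar z=P(\bar z,w)\,\xi$: at a true point $z=\bar z$ the vector $\xi$ solves the (frozen) linear system exactly, so if $P(\bar z,w)$ has full column rank the first branch of (\ref{ExpTInv}) returns $\xi$ itself.

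The crux, which I expect to be the main obstacle, is to produce $p_0>0$, uniform over $\CL(\CR_x)\times\CL(\CR_\theta)\times\CR_w$, with $P(\bar z,w)^\top P(\bar z,w)\ge p_0 I_{3n}$ at every true point. First, since $T$ is $C^1$, dividing (\ref{eq_InjTcontrGen}) by $|\xi-\xi^*|$ and letting $\xi^*\to\xi$ gives $\big|\frac{\partial T}{\partial(x,\theta)}(x,\theta,w)\,\delta\big|\ge L_T|\delta|$ for every direction $\delta$; by continuity this lower bound extends to the closure. To link this Jacobian to the frozen matrix $P(\bar z,w)$, I would differentiate the identity $\bar z=P(\bar z,w)\xi$ with respect to $\xi$ (with $w$ held fixed): because each $P_i(z_i,w_i)$ is affine in $z_i$, one obtains $\frac{\partial T}{\partial(x,\theta)}=D(\theta_a)\frac{\partial T}{\partial(x,\theta)}+P(\bar z,w)$, i.e. $P(\bar z,w)=\big(I_r-D(\theta_a)\big)\frac{\partial T}{\partial(x,\theta)}$, with $D(\theta_a)=\diag\{V_1^\top\theta_a,\dots,V_r^\top\theta_a\}$ depending on $\theta_a$ only. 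The diagonal entries $1-V_i^\top\theta_a$ of $I_r-D(\theta_a)$ equal, up to the nonzero factor $\lambda_i^{-n}$ and a sign, the value $\det(\lambda_i I_n-A(\theta_a))$ of the characteristic polynomial of $A(\theta_a)$; by the disjoint–spectrum condition (\ref{eq_DisjointEV}) they are nonzero, and by continuity on the compact set $\CL(\CR_\theta)$ bounded below in modulus by some $c>0$. Hence $|(I_r-D(\theta_a))y|\ge c|y|$ for all $y$, giving $|P(\bar z,w)\delta|\ge c\,L_T|\delta|$ and therefore $p_0=(cL_T)^2$.

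Next I would show the first branch of (\ref{ExpTInv}) is active near the image and fix the constants. Since $P_i(z_i,w_i)-P_i(\bar z_i,w_i)=\begin{bmatrix}0&(z_i-\bar z_i)V_i^\top&0\end{bmatrix}$, the map $z\mapsto P(z,w)$ is globally Lipschitz with constant $c_2=\max_i|V_i|$, so $|P(z,w)\delta|\ge(cL_T-c_2|z-\bar z|)|\delta|$. Choosing $\epsilon_T$ with $c_2\epsilon_T\le \tfrac12 cL_T$ and setting $p_{\min}=p_0/4$, every $(z,w)$ with $|z-\bar z|\le\epsilon_T$ satisfies $P(z,w)^\top P(z,w)\ge p_{\min}I_{3n}$, so $T^*(z,w)$ is well defined by the first branch. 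Writing $\hat\xi=T^*(z,w)$ for the (unique) minimiser of $\eta\mapsto|P(z,w)\eta-z|$, I would bound $|z-P(z,w)\xi|=\big|(z-\bar z)+(P(\bar z,w)-P(z,w))\xi\big|$; the $i$-th entry of the second term is $(\bar z_i-z_i)(V_i^\top\theta_a)$, so $|z-P(z,w)\xi|\le(1+c_1)|z-\bar z|$ with $c_1=\max_i|V_i^\top\theta_a|$ (finite on $\CL(\CR_\theta)$). By optimality $|z-P(z,w)\hat\xi|\le|z-P(z,w)\xi|$, hence $|P(z,w)(\hat\xi-\xi)|\le 2(1+c_1)|z-\bar z|$, while the rank bound gives $\sqrt{p_{\min}}\,|\hat\xi-\xi|\le|P(z,w)(\hat\xi-\xi)|$.

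Combining these two estimates yields $\big|\hat\xi-\xi\big|\le \frac{2(1+c_1)}{\sqrt{p_{\min}}}\,|z-T(x,\theta,w)|$, which is exactly (\ref{Est_TinvExp}) with $L_{T^*}=2(1+c_1)/\sqrt{p_{\min}}$, all constants being finite by compactness of $\CL(\CR_\theta)$. The only genuinely delicate point is the passage carried out in the second paragraph, namely turning the injectivity hypothesis (\ref{eq_InjTcontrGen}) into a uniform full–rank property of the frozen matrix $P(\bar z,w)$; the differentiated identity $P(\bar z,w)=(I_r-D(\theta_a))\frac{\partial T}{\partial(x,\theta)}$ together with the non-vanishing of $\det(\lambda_i I_n-A(\theta_a))$ is what makes this step work.
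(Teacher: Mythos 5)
Your argument is correct, and its pivotal step is the same as the paper's: differentiating the implicit representation \eqref{def_Pi} with respect to $(x,\theta)$ to obtain $P(T(x,\theta,w),w)=\big(I_r-\diag\{V_1^\top\theta_a,\dots,V_r^\top\theta_a\}\big)\tfrac{\partial T}{\partial(x,\theta)}(x,\theta,w)$, then combining invertibility of the diagonal factor (from \eqref{eq_DisjointEV}; your identification of the diagonal entries with the characteristic polynomial of $A(\theta_a)$ evaluated at $\lambda_i$ and normalized by $\lambda_i^{n}$ is the right justification, modulo the paper's own sign discrepancy between \eqref{eq:canonical-form} and its appendix computations) with the Jacobian lower bound extracted from \eqref{eq_InjTcontrGen}. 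Where you genuinely depart from the paper is in how \eqref{Est_TinvExp} is concluded. The paper is qualitative: it sets $p_{\min}$ to half of a compactness minimum of $\sigma_{\min}\{P^\top P\}$ along the image of $T$, notes that $T^*$ inverts $T$ exactly on that image, and invokes smoothness of the least-squares formula on an open neighborhood of the image to obtain an unspecified Lipschitz constant $L_{T^*}$ on a compact $\epsilon_T$-fattening. Your route is quantitative: Lipschitz continuity of $z\mapsto P(z,w)$, the residual bound $|z-P(z,w)\xi|\leq(1+c_1)|z-\bar z|$ at the true $\xi$, optimality of the least-squares minimizer, and the coercivity $|P\delta|\geq\sqrt{p_{\min}}\,|\delta|$ yield explicit constants $p_{\min}=\tfrac14(cL_T)^2$, $\epsilon_T\leq cL_T/(2\max_i|V_i|)$ and $L_{T^*}=2(1+c_1)/\sqrt{p_{\min}}$, and the uniform rank bound is derived directly from the factorization rather than by a compactness argument. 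Both are valid; yours is more informative (explicit constants usable in the robustness estimates), the paper's is shorter.

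One caveat: you take the identity \eqref{def_Pi} for granted, whereas proving it is the first of the three steps of the paper's proof. There it is established by writing $A(\theta_a)-\lambda_iI_n$ as a rank-one perturbation of the bidiagonal matrix $J_i$ and applying the Sherman--Morrison--Woodbury formula, which gives $(1-V_i^\top\theta_a)\,T_i(x,\theta,w_i)=V_i^\top x-w_iV_i^\top\theta_b$. Since the main text states \eqref{def_Pi} with its proof explicitly deferred to this appendix, a fully self-contained proof of Proposition \ref{Prop_ExplTinv} should include that derivation; everything you build on top of it is sound.
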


The proof of Proposition \ref{Prop_ExplTinv} is given in Appendix \ref{Sec_ProofPropExplTinv}.


Employing the results obtained so far, it is possible now to derive a complete algorithm and criterion for convergence of the proposed estimation scheme.
\begin{theorem}\label{Theo_IdentifGen}
Consider the system with $A$, $B$, $C$ defined in (\ref{eq:canonical-form}) and with the input $u$ defined as
$$
\dot v(t) = Jv(t)\ ,\ u(t)=Kv(t)\quad v(0)=v_0
$$
Let $\CR_x$ be a bounded open set in $\Re^n$.
Let $\CR_\theta$ be a bounded open set which closure is in $\Theta$ and such that for all $\theta=(\thetaa,\thetab)$ in $\CL(\CR_\theta)$ the couple 
$(A(\thetaa), B(\thetab))$ is controllable. 
Given $(\dlambda_1, \dots, \dlambda_{r})$ with $r=4n-1$, there exists  $k^*>0$ such that for all $k>k^*$, the observer  (\ref{eq_ObsGen}) with $\lambda_i = k\dlambda_i$ with the function $T^*$ given in (\ref{ExpTInv})
 yields the following property. For all solution $(x(t),\theta)$ which remains in $\CR_x\times\CR_\theta$, it yields
$$
\lim_{t\rightarrow +\infty}|x(t)-\hat x(t)|=0\ ,\ \lim_{t\rightarrow +\infty}|\theta-\hat \theta(t)|=0\ .
$$
\end{theorem}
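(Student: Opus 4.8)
The plan is to chain together, in sequence, the four constructive results already established, so that the bulk of the argument is bookkeeping rather than new analysis. First I would fix the data of the theorem: the sets $\CR_x$, $\CR_\theta$, the directions $(\dlambda_1,\dots,\dlambda_{r})$ with $r=4n-1$, and the input generator $\dot v=Jv$, $u=Kv$. By Lemma~\ref{lem:exciting-inputs}, choosing $v_0$ as prescribed there (and the dimension of the generator large enough), the signal $u$ is differentially exciting for all $t\geq0$, so that the Hankel matrix $\mat_{n-1}(\bu^{(2n-2)}(t))$ is invertible at every time. Because $J$ is skew adjoint the orbit $\{v(t)\}_{t\geq0}$ lies on a compact set; hence $\bu^{(4n-2)}(t)$ stays, for all $t$, in a fixed compact set $U_{4n}\subset\RR^{4n-1}$ on which $\mat_{n-1}$ is invertible, and $u$ together with its first $r-1$ derivatives is uniformly bounded by some $\mathfrak u$. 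This produces exactly the compact ``good-input'' set $U_{4n}$ and the input bound required downstream.

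Second, I would invoke Proposition~\ref{Prop_AssIndentif}: since $(A(\thetaa),B(\thetab))$ is controllable on $\CL(\CR_\theta)$ and $\mat_{n-1}$ is invertible on the compact set $U_{4n}$, Assumption~\ref{Ass_UnifDiffInjec} holds for $\HH_{4n-1}$ with some $L_\HH>0$. The existence theorem of Section~\ref{Sec_Existence} then guarantees that the linear-in-$x$ map $T$ of \eqref{eq_Ti}--\eqref{eq_T} solves the PDE \eqref{EDP}; the disjoint-spectrum condition \eqref{eq_DisjointEV} is automatic for $k$ large, because $\bigcup_\theta\sigma\{A(\theta)\}$ is bounded while $\lambda_i=k\dlambda_i\to-\infty$. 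Feeding Assumption~\ref{Ass_UnifDiffInjec} and the bound $\mathfrak u$ into Theorem~\ref{Theo_InjCont} yields a threshold $k_1^*$ and a constant $\bar L_T$ such that, for every $k>k_1^*$, the injectivity estimate \eqref{eq_InjTcontr} holds with $L_T=\bar L_T/k^{r}$; since $\bu^{(r-2)}(t)\in U_r$ for all $t$, the remark following Theorem~\ref{Theo_InjCont} lets me restate this time-independently as \eqref{eq_InjTcontrGen} on $\CR_x\times\CR_\theta\times\CR_w$, with $\CR_w=\bigcup_{t\geq\tau}\{w(t)\}$.

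Third, with \eqref{eq_InjTcontrGen} now in force, Proposition~\ref{Prop_ExplTinv} supplies $p_{\min}$, $\epsilon_T$ and $L_{T^*}$ for which the explicit left inverse $T^*$ of \eqref{ExpTInv} is well defined and obeys \eqref{Est_TinvExp} whenever $|z-T(x,\theta,w)|\leq\epsilon_T$; taking $k^*$ to dominate every preceding threshold fixes the observer. For convergence itself I would use the observer structure: along any solution for which $(x(t),\theta)$ remains in $\CR_x\times\CR_\theta$, the error $e(t)=z(t)-T(x(t),\theta,w(t))$ satisfies $\dot e=\Lambda e$, so $|e(t)|\leq e^{k\max_i\{\dlambda_i\}(t-\tau)}|e(\tau)|\to0$. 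After a finite transient one has $|e(t)|\leq\epsilon_T$; at that point $z(t)$ has entered the region where $P^\top P\geq p_{\min}I_{3n}$, so $T^*$ coincides with its pseudo-inverse branch and \eqref{Est_TinvExp} gives $\big|(\hat x(t),\hat\theta(t))-(x(t),\theta)\big|\leq L_{T^*}|e(t)|\to0$, which is precisely the asserted pair of limits.

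The main obstacle is the bookkeeping around the transient and the discontinuity of $T^*$. The injectivity of Theorem~\ref{Theo_InjCont} becomes available only after a waiting time $\tau$ and only on the trajectory-dependent set $\CR_w$, while the formula \eqref{ExpTInv} is set to $0$ off $\{P^\top P\geq p_{\min}I_{3n}\}$; one must therefore argue that $z(t)$ reaches and then stays in the good region, which holds precisely because $z$ converges to $\im T$, where $P$ is full rank by the injectivity estimate. Reconciling the several lower bounds on $k$ (from \eqref{eq_DisjointEV}, from Theorem~\ref{Theo_InjCont}, and from Proposition~\ref{Prop_ExplTinv}) into a single $k^*$, and verifying that the bounded orbit of the conservative generator keeps $\bu^{(r-2)}(t)$ in $U_r$ for \emph{all} $t$ rather than merely asymptotically, are the delicate points; once they are settled, the exponential decay of $e$ renders the remaining estimates routine.
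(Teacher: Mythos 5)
Your proposal is correct and follows essentially the same route as the paper: the paper's own proof is a single sentence declaring Theorem \ref{Theo_IdentifGen} a direct consequence of Lemma \ref{lem:exciting-inputs} and Propositions \ref{Prop_AssIndentif} and \ref{Prop_ExplTinv}, and your argument is exactly that chain, spelled out together with the intermediate use of Theorem \ref{Theo_InjCont} (which the paper invokes only implicitly) and the transient/discontinuity bookkeeping for $T^*$ that the paper leaves to the reader.
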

\changement{
\begin{proof}
Theorem 3 is a direct consequence of Propositions 4, 5 and Lemma 1. 
\end{proof}
}
\longue{\subsection{Numerical illustration}
\label{sec_NumIllustration}
In this part we show via simulation the performances and robustness of the observer (\ref{eq_DynExt})-(\ref{ExpTInv}) in the presence of an output  noise.
Let us select an  controllable and observable third order  system of the class (\ref{Sys}) where matrices $A$, $B$ and $C$ are given as
$$\begin{array}{c}
A=\left[ \begin{array}{rrr} -2.31 & -0.17&-0.16\\-0.17 &-1.02&0.04\\
-0.15&0.04&-0.26\end{array}\right];\ B=\left[ \begin{array}{c}0\\0.88\\0\end{array}\right];\\
\\
C=\left[\begin{array}{rrr} 1.18&-0.78&-0.96\end{array}\right].
\end{array}$$
Since this system is observable it admits an canonical representation of the form \eqref{eq:canonical-form} with matrices $\hat{A},\hat{B},\hat{C}$ given by 
$$\begin{array}{c}
\hat{A}(\hat{\theta})=\left[\begin{array}{ccc}-\hat{\theta}_{a1}& 1 & 0\\-\hat{\theta}_{a2} & 0 & 1\\ -\hat{\theta}_{a3}& 0&0\end{array}\right];\ \hat{B}(\hat{\theta})=\begin{bmatrix}\hat{\theta}_{b1}  \\ \hat{\theta}_{b2} \\ \hat{\theta}_{b3} \end{bmatrix} ;\\
\\
\hat{C}(\hat{\theta})=\left[\begin{array}{ccc} 1 &0 & 0\end{array}\right] \end{array}$$
We set in Table \ref{tab1} the observer configuration and necessary initial points to run a simulation with the Matlab software. 
\begin{table}[h]
\renewcommand{\arraystretch}{1.4}
\centering{
\begin{tabular}{|l|}
\hline
$r=4n-1$ ; $n=3$ ; $x(0)=0$ ; $z(0)=w(0)=0$ \\ 
\hline 
$\hat{\theta}_a(0)=\hat{\theta}_b(0)=0$; $\hat{x}(0)=0$  \\ 
\hline 
$\Lambda=k\left(Diag([0.1~0.2~0.3~0.4~0.5~0.6~0.7~0.8~0.9~1~1.1])\right)$\\ 
\hline 
The input $u(t)$ is a sum of sin signals of $4n-1$ distinct frequencies\\ 
\hline
\end{tabular}
}
\caption{System configuration.}\label{tab1}
\end{table}

The results of simulation are given in Fig.\ref{fig1} and Fig.\ref{fig3}. We can see from Fig.\ref{fig1} that the estimated system eigenvalues $\sigma\{\hat{A}(\hat{\theta})\}$ (which are invariant through a similar transformation) converge to the real system eigenvalues. Moreover, the speed of convergence is proportional to the gain $k$ but on the other side the output noise ($40dB$) effect is also proportional to $k$. 
As a consequence, a trade-off must be found between speed of convergence and robustness. This is completely in line with the result of Proposition 2. Another invariant parameter through a similar transformation is the relative error 
$$Err(\hat{\theta})=\dfrac{\|O_nB-\hat{O}_n(\hat{\theta})\hat{B}(\hat{\theta})\|}{\|O_nB\|},~~\hat{O}_n(\hat{\theta})=\begin{bmatrix}\hat{C} \\ \hat{C}\hat{A}(\hat{\theta}) \\ \vdots \\ \hat{C}\hat{A}^{n-1}(\hat{\theta})\end{bmatrix}$$ 
presented in Fig.\ref{fig3} which gives consistent results with those of Fig.\ref{fig1}. 
\begin{figure}
\begin{center}
\def\svgwidth{\columnwidth}
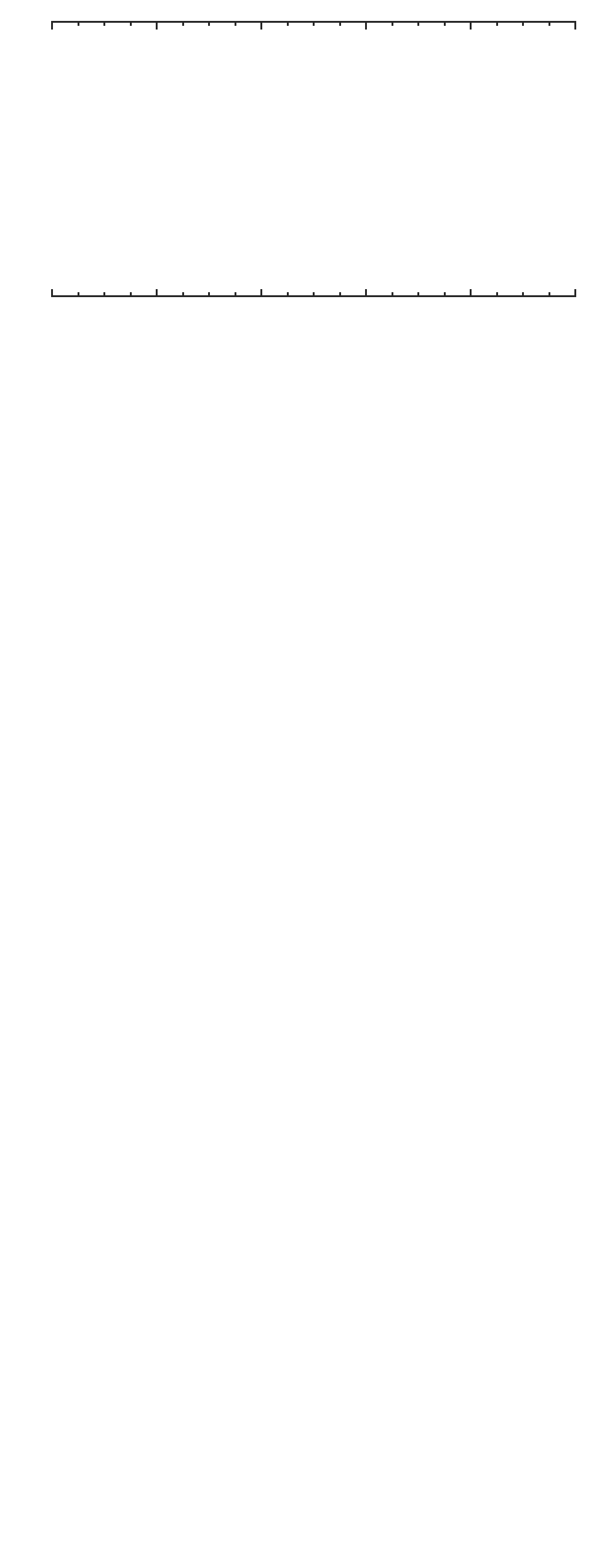
\caption{Convergence of Matrix $\hat{A}$ eigenvalues to the target (Matrix A eigenvalues) in presence of added output noise of $40 dB$ and for various values of the observer gain}\label{fig1}
\end{center}
\end{figure}

\begin{figure}
\begin{center}
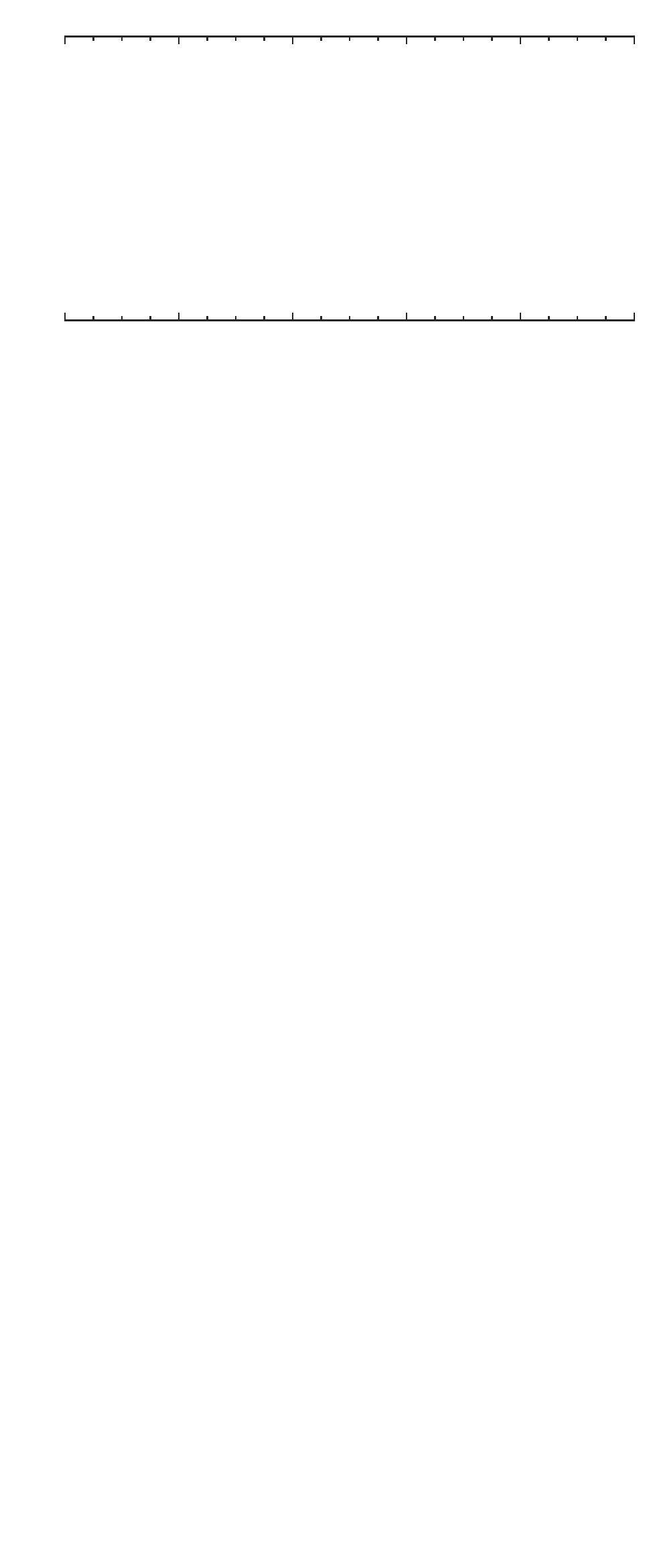
\caption{Evolution of Markov parameters in presence of added output noise of $40 dB$ and for $k=1,5,10,15$.}\label{fig3}
\end{center}
\end{figure}

}
\changement{
\section{Conclusions}
The design of a nonlinear Luenberger observer to estimate the state and the unknown parameters of a parameterized linear system was studied here.
In a first part of the study, a Luenberger observer was shown to exist. This result is obtained from the injectivity property of a certain mapping. In a second part, a simple identification algorithm was given for a particular state basis form with a novel persistence excitation condition named differential excitation which is linked with the classic definition. Then, a method to generate a persistent input based on differential excitation is given.
} 

\appendix

\subsection{Proof of Theorem \ref{Theo_InjCont}}
\label{Sec_ProofTheoInjCont}

First of all, picking $k$ sufficiently large implies that the matrix $M_i$ which satisfies
$M_i(\theta) = \dfrac{1}{k\tilde{\lambda}_i}C(\theta)\left( \dfrac{1}{k\tilde{\lambda}_i}A(\theta)-I_n\right)^{-1}$
is well defined.
On another hand, assume that $k$ is sufficiently large such that for all $i$ in $[1,r]$, 
\begin{equation}
k| \dlambda_i |  > \max\{|\sigma\{A(\theta)\}|\}\ ,\ \forall \theta\in\CR_\theta.
\end{equation}
This implies that, for $\theta$ in $\CR_\theta$:
\begin{equation*}
M_i(\theta) = -\sum_{j=1}^{r} \frac{1}{(k\dlambda_i)^j}C(\theta)A(\theta)^{j-1} + R_i(\theta) \ ,
\end{equation*}
with
$
R_i(\theta) = -\sum_{j=r+1}^{+\infty} \frac{C(\theta)A(\theta)^{j-1}}{(k\dlambda_i)^j}\ .
$
Let $\KR$ be the matrix in $\Re^{r\times r}$ defined as 
$$
\KR = \diag\left \{\dfrac{1}{k},\dots, \dfrac{1}{k^r}\right \} \text{ and }
\tilde V_i = \begin{bmatrix} \dfrac{1}{\tilde \lambda_i} & \dfrac{1}{\tilde \lambda_i^2} & \dots & \dfrac{1}{\tilde \lambda_i^{r}} \end{bmatrix}
$$
Note that $M_i$ satisfies the following equality:
$$
M_i(\theta) = - \tilde{V}_i\KR H_r(\theta) +R_i(\theta)\ .
$$
On the another hand, for all $t$, since $u$ is $C^r$ and $w(\cdot)$ being solution of \eqref{eq_DynExt}, one get:
$$
w_i^{(j)}(t)= (k\dlambda_i) w_i^{(j-1)}(t)+ u^{(j-1)}(t)\ ,\ j=1,\dots,r\ .
$$
which implies that $w_i$ satisfies:
$$
w_i(t) = - \sum_{j=0}^{r-2}\frac{u^{(j)}(t)}{(k\dlambda_i)^{j+1}} + R_{wi}(t)
$$
where:

\begin{align*}
R_{wi}(t)
&= \dfrac{\exp(k\dlambda_i t)w_i^{(r-1)}(0) }{(k\dlambda_i)^{r-1}}+ \int_0^t\exp((k\dlambda_i)(t-s))\frac{u^{(r-1)} }{(k\dlambda_i)^{r-1}}ds
\end{align*}
and with:
$$
w_i^{(r-1)}(0)=(k\dlambda_i)^{r-1} w_i(0) +
\sum_{j=1}^{r-1}(k\dlambda_i)^{r-1-j}u^{(j)}(0)\ .
$$
Hence, with (\ref{eq_Boundu}), it yields that for all $t$
:
$$
|R_{wi}(t)|\leq C\exp(k\dlambda_i t) + \frac{\mathfrak u}{(k\dlambda_i)^r}
$$
where $C$ is a positive real number which depends on $w_i(0)$ and $(u(0), \dots, u^{(r-2)}(0))$.
Keeping in mind that $\dlambda_i$ is negative, when $t$ is larger than $\tau>0$, the previous inequality implies:
$$
\left |R_{wi}(t)\right | \leq R_{wi0}(k) = 
C\exp(k\dlambda_i \tau) + \frac{\mathfrak u}{(k\dlambda_i)^r}
\ , \forall t\geq \tau\ ,
$$
where $R_{wi0}$ depends on $k$ but not on $t$.

By collecting terms of higher order in $\frac{1}{k}$ in a function denoted $R_{MBi}$, it yields the following equality.
\begin{equation*}
M_i(\theta) B(\theta)w_i(t) = \tilde V_i \KR \sum_{j=1}^{r-1}S^j H_r(\theta)B(\theta)u^{(j-1)}(t) + R_{MBi}(\theta,t),
\end{equation*}
and 
with

\begin{align*}
R_{MBi}(\theta,t) &= \sum_{j=1}^r \sum_{\ell =r-j}^{r-2}\dfrac{C(\theta)A(\theta)^{j-1}u^{(\ell)}(t)}{(k\dlambda)^{j+\ell+1}}\\
& - R_{wi}(t) \tilde{V}_i\KR H_r(\theta) - R_i(\theta)\sum_{j=0}^{r-2}\dfrac{u^{(j)}(t)}{(k\dlambda_i)^{j+1}}\ .
\end{align*}

Using the fact that $\CR_\theta$ and $u^{(j)}(t)$ are bounded, it yields the existence of two positive real numbers $C_0$ and $C_1$ such that for all $t\geq \tau$ :
$$
\left |R_{MBi}(\theta,t)\right | \leq \frac{C_{0}}{k^{r+1}} 
\ ,\ 
\left |\frac{\partial R_{MBi}}{\partial \theta}(\theta,t)\right |\leq \frac{C_{1}}{k^{r+1}} 
\ .
$$
Finally with (\ref{eq_Ti})
$$
T_i(x,\theta,w_i(t)) = \tilde V_i \KR \HH_r(x,\theta,\bar{u}^{(r-2)}(t)) + R_{Ti}(x,\theta,t)
$$
with
$$
R_{Ti}(x,\theta,t) = R_i(\theta)x + R_{MBi}(\theta,t)
$$
By denoting $R_T(x,\theta, t) = (R_{T1}(x,\theta,t), \dots R_{Tr}(x,\theta,t))$, this implies:
\begin{equation}
\label{eq_DecompT}
T(x,\theta,w_i(t)) = \tilde {\mathcal V} \KR \HH_r(x,\theta,\bu^{(r-2)}(t))+  R_{T}(x,\theta,t)\ ,
\end{equation}
where $\tilde {\mathcal V}$ in $\Re^{r\times r}$ is the Vandermonde matrix defined as:
$$
\tilde {\mathcal V}= \begin{bmatrix}
\frac{1}{\dlambda_1} & \cdot \cdot & \frac{1}{\dlambda_1^r}\\
:&&:\\
\frac{1}{\dlambda_r} & \cdot \cdot & \frac{1}{\dlambda_r^r}
\end{bmatrix}\ .
$$
Note that $R_T$ is a $C^1$ function and it is possible to find two positive real numbers $C_{T0}$ and $C_{T1}$ such that for all $(x,\theta)$ in $\CR_x\times\CR_\theta$ and $t\geq \tau$:
\begin{align*}
\left |\frac{\partial R_{T}(x,\theta,t)}{\partial x}\right |\leq \frac{C_{T0}}{k^{r+1}}\ ,\ 
\left|\frac{\partial R_{T}(x,\theta,t)}{\partial \theta}\right |\leq \frac{C_{T1}}{k^{r+1}}
\ .
\end{align*}
Hence the mapping $R_T$ is globally Lipschitz with a Lipschitz constant in $o \left (\frac{1}{k^{r}}\right )$. Hence, it is possible to find $k_0$ such that for all $k\geq k_0$ and all quadruples $(x,x^*,\theta,\theta^*)$ in $\CR_x^2\times\CR_\theta^2$, for all $t\geq \tau$:
\begin{equation}\label{eq_RT}
\left |R_T(x,\theta,t)-R_T(x^*,\theta^*,t)\right |
\leq \frac{L}{2\left |\tilde{\mathcal V}^{-1}\right |k^r}  \left |\begin{bmatrix} x-x^*\\ \theta-\theta^*\end{bmatrix}\right | \ .
\end{equation}
It can be shown that the result holds with this value of $k_0$. Indeed, employing \eqref{eq_DecompT}, it yields that, for all $t$:

\begin{align*}
\left|T(x,\theta,w(t))-T(x^*,\theta^*,w(t))\right|\geq - \left|R_T(x,\theta,t)-R_T(x^*,\theta^*,t)\right|\\
+\left |\tilde {\mathcal V}\KR \left (\HH_r(x,\theta,\bu^{(r-2)}(t))-\HH_r(x^*,\theta^*,\bu^{(r-2)}(t))\right)\right|\ ,\\
\\
\left|T(x,\theta,w(t))-T(x^*,\theta^*,w(t))\right|\geq - \left |R_T(x,\theta,t)-R_T(x^*,\theta^*,t)\right |\\
+ \frac{\left|\HH_r(x,\theta,\bu^{(r-2)}(t))-\HH_r(x^*,\theta^*,\bu^{(r-2)}(t))\right|}
{ |\tilde {\mathcal V}^{-1}||\KR^{-1}|}\ .
\end{align*}

Consider now $t_1\geq \tau$, the last term of the previous inequality can be lower-bounded by (\ref{eq_RT}).
Moreover, if $u(t_1)\dots, u^{(r-2)}(t_1)$ is in $U_r$, the other term can be lower-bounded based on Assumption \ref{Ass_UnifDiffInjec} and the result follows.
\longue{

\subsection{Proof of Proposition \ref{Prop_LinkPersis}}
\label{Sec_ProofPropLinkPersis}
Given an integer $\ell$, the Taylor expansion of $u$ at $t$ leads to the following expression
\begin{equation}
u^{(\ell)}(t+s) = \sum_{j=0}^{r}u^{(\ell+j)}(t)\frac{s^j}{j!} + R_\ell(t,s)\ ,\ \forall \ s\in\RR\ .
\end{equation}
where $R_\ell(t,s) = \int_0^{s}\int_0^{s_1}\dots\int_0^{s_{r}}  u^{(\ell+r+1)}(t+s_{r}) ds_1\dots ds_{r}$.
Hence this implies
\begin{align*}
\bar u^{(r)}(t+s)
= \mat_r(\bar u^{(2r)}(t))
D_r
V_r(s) + \begin{bmatrix}
R_0(t,s)\\
\vdots
\\
R_{r}(t,s)
\end{bmatrix}\ ,
\end{align*}
where $D_r = \texttt{Diag}\left\{1, 1, \dots, \frac{1}{r!}\right\}$ and $V_r(s) = \begin{bmatrix}
1 & s &\dots & s^{r}
\end{bmatrix}^\top$.
Since by assumption, $u$ is differentially exciting of order $r$ at time $t$, this implies the following equality.
$$
\int_{t}^{t+\epsilon} 
\bar{u}^{(r)}(s)
\left (\bar {u}^{(r)}(s)\right )^\top
ds = \mat_r(\bar u^{(2r)}(t))P(\epsilon,t)\mat_r(\bar u^{(2r)}(t))^\top
$$
where
\begin{align*}
P(\epsilon &,t) = D_r
\begin{bmatrix}\!\!
\epsilon & \frac{\epsilon^2}{2} & \frac{\epsilon^3}{3} & \dots & \frac{\epsilon^{r+1}}{r+1}\\
\frac{\epsilon^2}{2} & \frac{\epsilon^3}{3} & \dots & \frac{\epsilon^{r+1}}{r+1} & \frac{\epsilon^{r+2}}{(r+2)}\\ \vdots\\ \frac{\epsilon^{r+1}}{r+1} & & \dots&& \frac{\epsilon^{2r+1}}{2r+1} \!\!\end{bmatrix}D_r\\
&+ D_rN(\epsilon,t)(\mat_r(\bar u^{(2r)}(t))^\top)^{-1} + \mat_r(\bar u^{(2r)}(t))^{-1} N(\epsilon,t)^\top D_r
\end{align*}
where $N(\epsilon,t)$ is the $(r+1)\times (r+1)$ real matrix defined as $N(\epsilon,t)=\int_0^\epsilon V_r(s)\begin{bmatrix}
R_0(s,t) & \dots & R_{r}(s,t)
\end{bmatrix}$.

To show that inequality
 (\ref{eq_StandardDiffExc}) holds, it is needed to show that matrix $P$ is positive definite for sufficiently small $\epsilon$.
In order to show this, let $D_\epsilon = \texttt{Diag}\{1, \epsilon, \dots, \epsilon^{r}\}$.
The matrix $P$ can be decomposed as follows.
\begin{equation}\label{eq_MatP}
P(\epsilon,t)=D_\epsilon\epsilon\left( D_rH_r D_r+Q(\epsilon,t)\right)D_\epsilon\ .
\end{equation}
where $H_r$ is the Hilbert matrix defined as $H_r=\begin{bmatrix}
1 & \frac{1}{2} & \frac{1}{3} & \dots & \frac{1}{r+1}\\
\frac{1}{2} & \frac{1}{3} & \dots & \frac{1}{r} & \frac{1}{(r+2)}\\
\vdots\\
\frac{1}{r} & & \dots&& \frac{1}{2r+1}
\end{bmatrix}$ and
$Q(\epsilon,t)$ is the matrix defined as
\begin{align*}
Q(\epsilon,t) &= \dfrac{D_r D_\epsilon^{-1} N(\epsilon,t)
(\mat_r(\bar u^{(2r)}(t))^\top)^{-1}D_\epsilon^{-1}}{\epsilon}\\ 
&\qquad \qquad \qquad+ \dfrac{D_\epsilon^{-1}\mat_r(\bar u^{(2r)}(t))^{-1}
N(s,t)^\top D_\epsilon^{-1} D_r}{\epsilon}\ .
\end{align*}
The Hilbert matrix being positive definite, it implies that $P$ is positive definite for sufficiently small $\epsilon$ if  the norm of the matrix 
$Q$ goes to zero as $\epsilon$ goes to zero.
In order to upper bound the norm of $Q$, the following inequality can be obtained.
$$
|R_\ell(t,s)|\leq \sup_{\nu\in[0,s]}\left |u^{(\ell+r+1)}(t+\nu)\right |\frac{s^{r+1}}{(r+1)!}\ .
$$
This leads to the following inequality.
\begin{align*}
\left |\left (\frac{D_\epsilon^{-1} N(\epsilon,t)}{\epsilon}\right )_{i,\ell}\right | &= \left |\epsilon^{-i}\int_0^\epsilon s^{i-1} R_\ell(t,s) ds\right |\\
&\leq  \sup_{\nu\in[0,\epsilon]}\left |u^{(\ell+r+1)}(t+\nu)\right |
\frac{\epsilon^{r+1}}{(i+r+1)(r+1)!}
\end{align*}
Hence, for $\epsilon<1$, it yields
\begin{align*}
|Q(\epsilon,t)| &\leq 2 |D_r| \left | \frac{D_\epsilon^{-1} N(\epsilon,t)}{\epsilon}\right | \left |(\mat_r(\bar u^{(2r)}(t))^\top)^{-1}\right | \left |D_\epsilon^{-1} \right |  \\
&\leq 2 (1+r)\sup_{\ell\in[0,r],\nu\in[0,\epsilon]}\left |u^{(\ell+r+1)}(t+\nu)\right | \\
&
\null\qquad\qquad\times\dfrac{\epsilon^{1+r}}{(2+r)(r+1)!} \left |\mat_r(\bar u^{(2r)}(t))^{-1}\right | \epsilon^{-r} 
\end{align*}
This gives finally
$$
|Q(\epsilon,t)| \leq \epsilon 2(r+1)
\frac{\sup_{\ell\in[r+1,2r+1],\nu\in[0,1]}\left |u^{(\ell)}(t+\nu)\right |}{\sqrt{\rho_u(t)}(2+r)(r+1)!}
$$
where $\rho_u(t)$ is a positive real number such that $\mat_r(\bar u^{(2r)}(t))^\top \mat_r(\bar u^{(2r)}(t)) \geq \rho_u(t) I$ which exists since $u$ is differentially exciting of order $r$ at time $t$.

This implies that for $\epsilon$ sufficiently small $|Q(\epsilon,t)|$ becomes small.
This allows to say that the matrix $P$ defined in (\ref{eq_MatP}) is positive definite for small $\epsilon$.
Consequently, inequality 
 (\ref{eq_StandardDiffExc}) holds and the result follows.
}

\subsection{Proof of Proposition \ref{Prop_AssIndentif}}
\label{Sec_ProofPropAssIndetif}
This proof is decomposed into two parts.
In a first part the injectivity of the mapping $\HH_{4n-1}$ is demonstrated.
Then it is shown that it is also full rank. 
From this, the existence of the positive real number $L_\HH$ is obtained employing \cite[Lemma 3.2]{Andrieu_SICON_2014}.

\noindent\textbf{Part 1: Injectivity}
Assume there exist $(x,\theta)$ and $(x^*,\theta^*)$ both in $\CR_x\times\CR_\theta$ and $v=(v_0, \dots, v_{4n-2})$ in $U_{4n}$ such that
$
\HH_{4n-1}(x,\theta,v) = \HH_{4n-1}(x^*,\theta^*,v)
$.
To simplify the notation, let us denote $y_j= \left(\HH_{4n-1}(x,\theta,v)\right)_{j+1}=\left(\HH_{4n-1}(x^*,\theta^*,v)\right)_{j+1}$ for $j=0, \dots, 4n-1$. 
Note that for all $j\geq n$ we have
\begin{equation}\label{Dyn_diffy}
y_j = -\theta_{an}y_{j-n} - \dots - \theta_{1} y_{j-1} + \theta_{bn} v_{j-n} + \dots + \theta_{b1} v_{j-1}\ .
\end{equation}
It follows that the following set of $3n$ equations holds, 
\begin{equation}\label{eq_CondGen}
\begin{array}{ll}
0 &= \begin{bmatrix}
(\HH_r(x,\theta,v) -\HH_r(x^*,\theta^*,v))_{n}\\
(\HH_r(x,\theta,v) -\HH_r(x^*,\theta^*,v))_{n+1}\\
\vdots\\
(\HH_r(x,\theta,v) -\HH_r(x^*,\theta^*,v))_{4n-1}
\end{bmatrix}\\
\\
&=\begin{bmatrix}  
y &  \dots & y_{n-1} & v_0 & \dots & v_{n-1}\\
y_1&  \dots & y_{n} & v_1 & \dots & v_{n}\\
\vdots & \vdots & \vdots \\
y_{3n-1} & \dots & y_{4n-2} & v_{3n-1} & \dots & v_{4n-2}
\end{bmatrix}\Delta

\end{array}
\end{equation}
where $\Delta=\begin{bmatrix}\delta_{an}&\cdots&\delta_{a1}&\delta_{bn}&\cdots&\delta_{b1}\end{bmatrix}^\top$, $\delta_{aj}=\theta_{aj}^*-\theta_{aj}$ and $\delta_{bj}=\theta_{bj}-\theta_{bj}^*$.
This yields for $\ell=0,\dots,2n-1 $
$$
\begin{bmatrix}  
y_{\ell} &  \dots & y_{\ell+n-1} & v_\ell & \dots & v_{\ell+n-1}\\
 y_{\ell+1} &  \dots & y_{\ell+n} &  v_{\ell+1} & \dots & v_{\ell+n}\\
 \vdots & \vdots & \vdots \\
y_{\ell+n} &  \dots & y_{\ell+2n-1} & v_{\ell+n} & \dots & v_{\ell+2n-1}
\end{bmatrix}
\Delta=0\ .
$$
Hence, employing equality (\ref{Dyn_diffy}) on the last line of the previous vector and multiplying the previous vector by
$
\begin{bmatrix}
\theta_{an} & \theta_{a(n-1)} & \dots & \theta_{a1} &1
\end{bmatrix}
$
leads to an algebraic equation depending only on $v$ in the form 
\begin{equation}
\label{eq_u}\sum_{j=0}^{2n-1} c_j v_{\ell+j} = 0\ ,\ \ell=0,\dots, 2n-1\ .
\end{equation}
where
$$
\begin{aligned}
&c_0 = \delta_{bn}\theta_{an} + \delta_{an}\theta_{bn} \\
&c_1 = \theta_{an}\delta_{b(n-1)} + \theta_{a(n-1)}\delta_{bn} + \theta_{b(n-1)}\delta_{an} + \theta_{bn}\delta_{a(n-1)}
\end{aligned}
$$
and more generally, the $c_j$ are given by the matrix definition
$$\begin{bmatrix}c_0&\cdots&c_{2n-1}\end{bmatrix}=\mathcal M(\thetaa,\thetab)\Delta$$
where $\mathcal M(\thetaa,\thetab)$ is the Sylvester real matrix.\longue{defined as

\begin{equation}\label{eq_SylvMatrix}
\mathcal M(\thetaa,\thetab) = \begin{bmatrix}
\theta_{bn}     & \cdots& 0             &\theta_{an}    & \cdots& 0         \\
\theta_{b(n-1)} & \ddots&\vdots         &\vdots         & \ddots& \vdots    \\
\vdots          & \ddots&\theta_{bn}    &\theta_{a1}    & \ddots&\theta_{an}\\
\theta_{b1}     & \ddots&\theta_{b(n-1)}& 1             & \ddots&\vdots     \\ 
\vdots          & \ddots&\vdots         & \vdots        & \ddots&\theta_{a1}\\
 0              &\cdots &\theta_{b1}    & 0             & \cdots&1
\end{bmatrix} \ .
\end{equation}
}
Finally, this can be rewritten
$$
\mat_{n-1}(v) \mathcal M(\thetaa,\thetab) \Delta =0
$$
Note that due to the particular structure of the  couple $(A(\thetaa),B(\thetab))$ and with controllability property, it implies that the Sylvester matrix 
is invertible for all $(\theta_a,\theta_b)$ in $\CL(\CR_\theta)$.
The matrix  $\mat_{n-1}(v)$ being also invertible by assumption, this implies that $0=\delta_{a1}=\dots = \delta_{an}= \delta_{b1}=\dots = \delta_{bn}$ and consequently $\theta = \theta^*$.
From the observability property of the couple $(A(\theta_a),C)$, this yields that $x=x^*$.
We conclude injectivity of the mapping $\HH_{4n-1}$ with respect to $(x,\theta)$.\\

\textbf{Part 2: The mapping $\HH_{4n-1}$ is full rank}.\\
Let $\HH_{4n-1}$ satisfy the following equation 
\begin{equation}\label{RankCond}
\frac{\partial \HH_{4n-1}}{\partial x}(x,\theta,v)v_x + \frac{\partial \HH_{4n-1}}{\partial \theta}(x,\theta,v)v_\theta = 0.
\end{equation}\label{RankForm}
Then, we must prove that $[v_x^T ~~~~ v_\theta^T]^T=0$ for all $(x,\theta)$ in $\CR_x \times \CR_\theta$. We have for $i=0,...,4n-1$
$$
\frac{\partial y_i}{\partial (x,\theta)}\begin{bmatrix}
v_x\\v_\theta
\end{bmatrix} = 0
$$
and for $i=n,...,4n-1$, 
$$
y_i = \theta_{an}y_{i-n} + \cdots +\theta_{a1}y_{i-1} + \theta_{bn}v_{i-n} + \cdots +\theta_{b1}v_{i-1}
$$
So
\begin{align*}
\frac{\partial y_i}{\partial (x,\theta)}\begin{bmatrix}
v_x\\v_\theta
\end{bmatrix} &= \theta_{an}\frac{\partial y_{i-n}}{\partial (x,\theta)}\begin{bmatrix} v_x\\v_\theta \end{bmatrix}+\cdots+ \theta_{a1}\frac{\partial y_{i-1}}{\partial (x,\theta)}\begin{bmatrix} v_x\\v_\theta \end{bmatrix}\\
&+ \begin{bmatrix} y_{i-1} & \cdots & y_{i-n} & v_{i-1} &\cdots& v_{i-n}\end{bmatrix}v_\theta\\
&=\begin{bmatrix} y_{i-1} & \cdots &y_{i-n} & v_{i-1} &\cdots& v_{i-n} \end{bmatrix}v_\theta
\end{align*}
And so, if we follow exactly the same reasoning as in the previous step and consider the same assumptions, one can conclude that $v_\theta=0$.

 On the other hand, we can write $\HH_{4n-1}(x,\theta,v)$ in the following form
$$
\HH_{4n-1}(x,\theta,v) = H_{4n-1}(\theta)x + \sum_{j=1}^{4n-2}S^j H_{4n-1}(\theta)B(\theta)v_{j-1}\ ,
 $$
and from (\ref{RankCond}) we get 
$$\frac{\partial \HH_{4n-1}}{\partial x}v_x+\frac{\partial \HH_{4n-1}}{\partial \theta}v_\theta=H_{4n-1}(\theta)v_x+\frac{\partial \HH_{4n-1}}{\partial \theta}v_\theta=0.$$
But we have proved that $v_\theta=0$. Therefore 
$H_{4n-1}(\theta)v_x=0$. Since we assume that $y$ is observable for each $\theta$ in $\CR_\theta$, $H_{4n-1}(\theta)$ is full column rank  so that $v_x=0$ and the result follows.

\subsection{Proof of Proposition \ref{Prop_ExplTinv}}
\label{Sec_ProofPropExplTinv}
The proof of this result is made in three steps.
In a first step it is shown that the function $T$ is solution to an implicit equation in which the unknown $x$ and $\theta$ appear linearly.
In a second step, it is shown that the linear matrix which appears is full rank. Finally, the selection of $p_{\min}$ is made.

\noindent \textbf{Step 1} Proof that (\ref{def_Pi}) holds
Indeed,  note that the function $M_i(\theta)$ can be rewritten
\begin{equation}\label{eq:Inv1}
	M_i(\theta) = C  (A(\thetaa)-\lambda_i I_n)^{-1}\ .
\end{equation}
Let
$$J_i=\begin{bmatrix} -\lambda_i&1&\cdots&0\\ \vdots&\ddots&\ddots&\vdots\\ 0&\cdots&-\lambda_{i}&1\\0 &\cdots&0&-\lambda_i\end{bmatrix} \in \Re^{n\times n} 
\ ,$$
then
$A(\thetaa)-\lambda_i I_n = J_i -\theta_a C$.
Applying the Sherman-Morrison-Woodbury formula, one gets:
\begin{equation}\label{eq:Inv2}
(A(\thetaa)-\lambda_i I_n)^{-1}=J_i^{-1}+\frac{J_i^{-1}\thetaa C J_i^{-1}}{1-CJ_i^{-1}\thetaa}
\end{equation}
where  $1-e_1^TJ_i^{-1}\thetaa\neq 0$ is obtained from \eqref{eq_DisjointEV}.
Combining \eqref{eq:Inv1} and \eqref{eq:Inv2} gives us
$
(1-C J^{-1}_i \thetaa)M_i(\thetaa)=(CJ_i^{-1})
$,
which, together with \eqref{eq_Ti}, reveals that: 
$$
\left (1-CJ^{-1}_i \thetaa\right ) T_i(x,w) = C J^{-1}_ix - CJ^{-1}_i B(\theta_b) w_i\ . 
$$
it can be verified that $V_i^T=e_1^TJ_i^{-1}$. 
Rearranging the previous expression, one gets the proof of \eqref{def_Pi}. 

\noindent \textbf{Step 2:} 
The matrix $P$ defined in \eqref{def_Pi} has full column rank for all $(z,x,\theta,w)$ such that $z=T(x,\theta,w)$ with $(x,\theta,w)$ in $\CL(\CR_x)\times\CL(\CR_\theta)\times\CL(\CR_w)$.

Differentiating \eqref{def_Pi} with respect to $(x,\theta)$ yields for all $(x,\theta,w)$
$$
\frac{\partial T_i}{\partial (x,\theta)}(x,\theta,w) = \frac{\partial T_i}{\partial (x,\theta)}(x,\theta,w)  V_i^\top\thetaa + P_i(T_i(x,\theta,w) ,w)
$$
This implies that
$
[1-V_i^\top\thetaa]\frac{\partial T_i}{\partial (x,\theta)}(x,\theta,w) =  P_i(T_i(x,\theta,w) ,w)
$.
Hence:
$$
\diag \left \{1-V_1^T \theta_a,...,1-V_r^T \theta_a\right \}\frac{\partial T}{\partial (x,\theta)}(x,\theta,w) = P(z,w)
$$
Again, since condition \eqref{eq_DisjointEV}  holds, it yields that  $\diag \left \{1-V_1^T \theta_a,...,1-V_r^T \theta_a\right \}$ is invertible.
Moreover, since (\ref{eq_InjTcontrGen}) is satisfied, it implies that $\frac{\partial T}{\partial (x,\theta)}(x,\theta,w)$ is full column rank.
Consequently $P$ is full column rank for all $(z,x,\theta,w)$ such that $z=T(x,\theta,w)$ with $(x,\theta,w)$ in  $\CL(\CR_x)\times\CL(\CR_\theta)\times\CL(\CR_w)$.

\noindent\textbf{Step 3:} Conclusion:
Finally, let for all $(x,\theta,w)\in\CL(\CR_x)\times\CL(\CR_\theta)\times\CL(\CR_w)$ 
\begin{align}
p_{\min} = \dfrac{1}{2} \min \sigma_{\min}\left \{P(T(x,\theta,w),w)^\top P(T(x,\theta,w),w)\right \}.
\end{align}
With this definition, it yields that given $(x,w,\theta)$ in $ \CL(\CR_x)\times\CL(\CR_\theta)\times\CL(\CR_w)$, we have
$$
P(T(x,\theta,w),w)^\top P(T(x,\theta,w),w)\geq p_{\min}I_{3n}\ .
$$
Hence, with the mapping $T^*$ defined in (\ref{ExpTInv}) it yields, 
\begin{align*}
T^*(T(x,\theta,w)) &= ( P(T(x,\theta,w),w)^\top P(T(x,\theta,w),w))^{-1}\\
&\hspace{6em}\times P(T(x,\theta,w),w)^\top T(x,\theta,w) 
\end{align*}
which gives from \eqref{def_Pi}
$$
T^*(T(x,\theta,w),w)  = \begin{bmatrix}
 x^\top & \theta_a^\top & \theta_b^\top\end{bmatrix}^\top \ .
$$
Let also $\CR_{zw}$ be the open subset of $\RR^{2r}$ such that 
$$
\CR_{zw} = \!\Big\{(z,w), z=T(x,\theta,w), \sigma_{\min} \!\big \{\!P(z,w)^\top \! P(z,w)\big\}> p_{\min}\!\Big\}\ .
$$
Note that $\CR_{z,w}$ is an open subset in which $T^*$ is smooth and which contains the compact  set $\{(z,w), z=T(x,\theta,w), x\in\CR_x, \theta\in\CR_\theta\}$. 
Let $\epsilon_{T}$ be a positive real number sufficiently small such that the compact set
\begin{align*}
\CL(\CR_{z}) =& \{z\in \RR^r, \exists (x,\theta,w)\in \CL(\CR_x)\times\CL(\CR_\theta)\times\CL(\CR_w) ,\\
& |z-T(x,\theta,w)|\leq \epsilon_T\}
\end{align*}
satisfies $\CL(\CR_{z}) \times \CL(\CR_{w})\subset  \CR_{zw} $.
Note that $T^*$ is Lipschitz in $\CL(\CR_{z}) \times \CL(\CR_{w})$. Hence  the result holds for a particular $L_{T^*}$.

\bibliography{BibVA}

\end{document}